\documentclass[12pt,reqno]{amsart}
\usepackage[utf8]{inputenc}
\usepackage[english]{babel}
\usepackage{comment}
\usepackage{amsmath,amssymb,amsfonts,amsbsy,amsthm,amscd,latexsym,graphicx}
\usepackage{empheq,textcomp}
\usepackage{xcolor}
\usepackage{hyperref}
\usepackage{enumerate}

\theoremstyle{definition}
\newtheorem{theorem}{Theorem} [section]
\newtheorem{corollary}[theorem]{Corollary}
\newtheorem{lemma}[theorem]{Lemma}

\newtheorem{definition}[theorem]{Definition}

\newtheorem{remark}[theorem]{Remark}

\numberwithin{equation}{section}

\AtEndDocument{
  \par
  \bigskip
  \begin{tabular}{@{}l@{}}%
    \text{Department of Mathematics, University of Oregon, Eugene, OR 97403-1222, USA}\\
    \textit{E-mail address}: \texttt{mbownik@uoregon.edu}\\ 
   
    \textit{E-mail address}: \texttt{putingyu@uoregon.edu}\\
  \end{tabular}}

\newcommand{\C}{\mathbb{C}}

\newcommand{\Fc}{{\mathcal{F}}}
\newcommand{\Gc}{{\mathcal{G}}}
\newcommand{\Nc}{{\mathcal{N}}}
\newcommand{\Kc}{{\mathcal{K}}}
\newcommand{\N}{\mathbb{N}}
\newcommand{\R}{\mathbb{R}}

\newcommand{\Wc}{{\mathcal{W}}}

\newcommand{\Eq}{\, = \,}

\newcommand{\Le}{\, \le \,}

\newcommand{\qeddef}{{\quad $\diamondsuit$}}

\newcommand{\bigabs}[1]{\bigl|\,#1\,\bigr|}

\newcommand{\ip}[2]{\langle\,#1,#2\,\rangle}
\newcommand{\bigip}[2]{\bigl\langle \,#1, \, #2 \,\bigr\rangle}

\newcommand{\norm}[1]{\|\,#1\,\|}
\newcommand{\bignorm}[1]{\bigl\|\,#1\,\bigr\|}
\newcommand{\Bignorm}[1]{\Bigl\|\,#1\,\Bigr\|}

\newcommand{\bigparen}[1]{\bigl(\,#1\,\bigr)}
\newcommand{\Bigparen}[1]{\Bigl(\,#1\,\Bigr)}

\newcommand{\set}[1]{\{#1\}}
\newcommand{\bigset}[1]{\bigl\{\,#1\,\bigr\}}

\newcommand{\clspan}{{\overline{\text{span}}}}

\newcommand{\inN}{_{n\in\N}}
\newcommand{\sumli}{\sum_{n=1}^\infty}

\begin{document}
\title{Uniform discretization of continuous frames}
\author{Marcin Bownik and Pu-Ting Yu}

\thanks{The first author was partially supported by the NSF
grant DMS-2349756.}

\subjclass[2020]{42C15, 42C40}

\keywords{continuous frame, discrete frame, discretization, Gabor systems, wavelets}

\date{\today}

\begin{abstract}
Let $H$ be an infinite-dimensional separable Hilbert space and
let $(X,d,\mu)$ be a metric measure space satisfying the doubling and upper Alhfors regularity conditions at small scale. We prove that every bounded continuous tight frame $\Psi\colon X\rightarrow H$ can be sampled to obtain a frame for $H$, which is uniformly discrete and nearly tight. That is, for every $0<\epsilon<1$, there exist a sampling sequence $\set{x_n}\inN$ in $X$ and $r>0$ such that $\inf_{n\neq m}d(x_n,x_m)\geq r$ and $\set{\Psi(x_n)}\inN$ is a frame whose ratio of frame bounds is less than $1+\epsilon$.

We apply our main result to show that
for every nonzero function $g$ in $L^2(\R^d)$ there exists a uniformly discrete set $\Lambda$ such that the corresponding Gabor system $\set{e^{2\pi ibx}g(x-a)}_{(a,b)\in \Lambda}$ is a nearly tight frame. We also prove that if $\psi\in L^2(\R)$ satisfies the Calder\'on admissibility condition, then there exists a uniformly discrete set $\Gamma$ such that wavelet system $\set{a^{1/2}\psi(ax-b)}_{(a,b)\in \Gamma}$ is a nearly tight frame. Analogous discretization results for exponential frames and spectral subspaces of elliptic differential operators are presented as well.
\end{abstract}

\maketitle

\section{Introduction}
After being rediscovered by Daubechies, Grossmann and Meyer in \cite{DGM86}, the theory of \emph{frames} has inspired a plethora of research in both applied and theoretical mathematics. Among its numerous noteworthy applications, the connection with \emph{canonical coherent states} in quantum mechanics provides the motivation for the work presented in this paper.
The notion of frames was first proposed by Duffin and Schaefer in their study of non-harmonic series \cite{DS52}. A \emph{frame} for a separable Hilbert space $H$ is a sequence $\set{x_n}_{n\in I}$ in $H$ for which there exist some positive constants $A$ and $B$, called \emph{frame bounds}, such that the following inequality holds: 
\begin{equation}
\label{frame_ineq}
A\,\norm{x}^2\Le \sum_{n\in I} |\ip{x}{x_n}|^2\Le B\,\norm{x}^2 \qquad \text{for all $x\in H$.}
\end{equation}
If $A=B$, then we say $\set{x_n}_{n\in I}$ is a \emph{tight frame} for $H$. In the case when $A=B=1$, $\set{x_n}_{n\in I}$ is referred to as a \emph{Parseval frame} for $H$.
Canonical coherent states are a special family of \emph{coherent states}, which are obtained as the orbits of the unitary action of Weyl-Heisenberg group acting on the ground state of the harmonic oscillator. Beyond this fundamental group action structure, they also exhibit some other frame-like features, such as the resolution of the identity \cite[Chapter 1]{AAG00}, which mirrors the invertibility of the \emph{frame operator} arising from the bounds in (\ref{frame_ineq}). For further connections between frames and canonical coherent states, we refer to \cite{AAG14}, \cite{Bo18a} and \cite{Ka94}.

The notion of resolution of identity, together with the concept of frames was later generalized to \emph{continuous frames} independently by Ali, Antoine and Gazeau in \cite{AAG93}, and by Kaiser in \cite{Ka94}. Let $(X,\mu)$ be a positive measure space. A measurable function $\Psi\colon X\rightarrow H$ is called a continuous frame for $H$ if there exist some constants $A$ and $B$, called \emph{frame bounds}, such that
    \begin{equation}
\label{frame_ineq_continuous}
    A\,\norm{x}^2\Le \int_X\bigabs{\ip{x}{\Psi(t)}}^2\,d\mu(t)\Le B\,\norm{x}^2 \qquad \text{for all $x\in H$.}
    \end{equation}
In the case when $A=B$, we say $\Psi$ is a \emph{continuous tight frame} for $H$. If $A=B=1$, then $\Psi$ is referred to as a \emph{continuous Parseval frame}. Instead of summing over an associated countable index set, the notion of continuous frames adopts integration over underlying measure space $X$. 
The original ``discrete" frames constitute a special family of continuous frames when the associated measure is the counting measure on the index set $I$. 

In a converse direction, Ali, Antoine and Gazeau proposed the discretization problem of whether one can sample a continuous frame to obtain a discrete frame, see \cite[Chapter 16]{AAG00} or \cite[Chapter 17]{AAG14} for an expanded treatment. In the language of signal processing, this question can be reformulated as: How does one sample a continuous signal without losing information? On one hand, a positive answer to this discretization problem establishes a bi-directional theoretical bridge between discrete and continuous frames. On the other hand, the feasibility of continuous frame discretization would significantly facilitate computation when we numerically approximate the integral in (\ref{frame_ineq_continuous}).

    The formulation of the discretization problem of continuous frames can be further refined and broken down into several layers of inquiry: 
    \begin{enumerate}[(i)]\setlength\itemsep{0.5em}
        \item 
   Existence: Is it always possible to obtain a frame by sampling a continuous frame?
   \item Tightness of frame bounds: Equal frame bounds often offers excellent numerical stability and robust reconstruction of elements (for example, see \cite{GKK01}, \cite{HP04}, \cite{Wa03}). 
   If the continuous frame is tight, can one always acquire a frame, through appropriate sampling, with frame bounds that are arbitrarily close to each other?
   \item Distinctness of the sampling sequence: A common property shared by frames and canonical coherent states is overcompleteness. This redundancy often endows frames with great flexibilities in the reconstruction of elements. However, it could potentially result from a large amount of repeated elements, which diminishes the flexibility in the reconstruction of elements and leads to an inefficient representation. Consequently, is it always possible to construct a frame by using distinct sampling of a continuous frame? 
   \end{enumerate}
   
   Regarding the question of existence, Fornasier and Rauhut \cite{FR05} gave a partial answer for a specific family of continuous frames, using the theory of coorbit spaces. The case of coherent states associated with the Poincar\'{e} group in $1+3$ dimensions was solved by Antoine and Hohou\'{e}to \cite{AH03}. The existence problem was solved in full generality by Freeman and Speegle in \cite{FS19}. However, the resulting frames neither possess frame bounds that can be made arbitrarily close nor they avoid the use of duplicate elements. Recently, the first author \cite{Bo24} showed  that one can always sample a continuous frame to obtain a frame with arbitrarily close frame bounds. Nevertheless, the resulting frame may still contain repeated elements. In this paper, we fully resolve the discretization problem in all three aspects, under only mild assumptions on the underlying metric measure space $(X,d,\mu)$ equipped with metric $d$ and measure $\mu$. We assume that $\mu$ is a Borel measure, $\mu(X)=\infty$, and $\mu$ satisfies the doubling condition and upper Ahlfors regularity condition at small scales, see Section 2.

We prove that it is possible to sample a continuous tight frame by a uniformly discrete sequence to obtain a frame whose ratio of frame bounds is arbitrarily close to $1$. Here, we say a sequence $\set{x_n}\inN$ in a metric space $(X,d)$ is \emph{uniformly discrete} if there exists some constant $r>0$ such that $\inf_{n\neq m} d(x_n,x_m)\geq r.$  
Given a frame with frame bounds $A$ and $B$, the \emph{ratio of frame bounds} is defined as the number $B/A.$ 
In the setting when $X$ is the usual Euclidean spaces $\R^d$ equipped with the Lebesgue measure $\mu$ our result reads as follows.      
    \begin{theorem}
    \label{main_result_Lebesgue} 
 Let $\Psi\colon \R^d\rightarrow H$ be a continuous frame with frame bounds $A$ and $B$. Assume that $\norm{\Psi(t)}^2\Le D$ almost everywhere. Then for any $\epsilon>0$, there exists a uniformly discrete sequence $\set{x_n}\inN\subseteq X$ such that $\set{\Psi(x_n)}\inN$ is a frame whose ratio of frame bounds is at most $\frac{B}{A}(1+\epsilon).$
 In particular, if $\Psi$ is a continuous tight frame, then the ratio of frame bounds of $\set{\Psi(x_n)}\inN$ is at most $1+\epsilon$. \qeddef
 \end{theorem}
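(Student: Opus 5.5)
Since $\Psi$ is only assumed measurable and bounded, sampling cannot be done by any continuity argument. The plan is to reduce to a discretization result that tolerates repetitions and then remove the repetitions by perturbing sample points inside small cells. First I would invoke the discretization theorem of \cite{Bo24}, or rather its proof: it uses the Marcus--Spielman--Srivastava solution of the Kadison--Singer problem and Weaver's $KS_r$ conjecture, in the form of the Lyapunov-type theorem for frames. The structure is to partition $\R^d$ into cubes $Q_k$ of a small side length $\delta$. On each cube $Q_k$ the positive trace-class operator $T_k=\int_{Q_k}\Psi(t)\otimes\Psi(t)\,d\mu(t)$ has trace at most $D\delta^d$. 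Hence each $T_k$ has small trace, and $\sum_k T_k=S$ is the frame operator, with $A\,I\le S\le B\,I$.

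The key step is to replace each $T_k$ by a finite sum of rank-one operators $c\,\Psi(x)\otimes\Psi(x)$, with $x$ sampled from $Q_k$ and a common weight $c$. Then I apply the selector/partition form of Weaver's theorem. Because every piece has trace $\le D\delta^d$, which can be made as small as we like, the rank-one family $\{\Psi(x)\}$ with $x$ ranging over a.e.\ points of $Q_k$ admits an exact selection: one chooses points $x_{k,1},\dots,x_{k,m_k}$ in $Q_k$ so that the operator $\sum_k\sum_j\Psi(x_{k,j})\otimes\Psi(x_{k,j})$ is within $\epsilon A$ of $S/\delta^d$, scaled appropriately. This is the step handled in \cite{Bo24} via a sampling/averaging (Lyapunov) theorem applied to the pushforward of $\mu|_{Q_k}$.

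The new point is \emph{uniform discreteness}. Using upper Ahlfors regularity, here trivially $|B(x,r)|\sim r^d$, a cube of side $\delta$ contains only a bounded number $N\sim(\delta/r)^d$ of $r$-separated points. I therefore need to control how many samples fall in each cube and ensure that they are distinct and separated. For this I would refine the construction. I subdivide each $Q_k$ into $M$ subcubes and, rather than sampling freely, I require at most one sample point per subcube, with the subcubes spaced (e.g.\ only use subcubes of one parity class, then shrink). Each subcube sample is a point of positive-measure set. Since $\Psi$ is bounded, the Lyapunov/averaging step can be carried out with one point chosen from each of many disjoint sets of equal measure: the operator $\int_{Q'}\Psi\otimes\Psi$ over a subcube $Q'$ is approximated by $|Q'|\,\Psi(x)\otimes\Psi(x)$ for some $x\in Q'$ in the sense of the Weaver partition bound, applied to the family of subcube averages. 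Choosing points in a fixed central shrunk subcube of each $Q'$ guarantees separation $r$ comparable to the subcube side.

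I expect the main obstacle to be making the Kadison--Singer-type selection compatible with the one-point-per-subcube constraint. The argument must produce a uniform lower bound $\approx A(1-\epsilon)$ rather than merely an upper bound, and the random-point approximation per subcube only yields a \emph{convex-combination} approximation. I would first try the approach of \cite{Bo24}: write each subcube integral as an average over its points. Then I would use the Marcus--Spielman--Srivastava theorem on the mixed characteristic polynomial, where independent random vectors take values $\sqrt{|Q'|}\,\Psi(x)$ with $x$ uniform in $Q'$. Their expectations sum to $S$, and each has norm squared $\le D|Q'|$, which is small. This gives a realization with $\|\sum\|\le B(1+\epsilon)$. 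The lower bound is obtained by the standard trick of applying the theorem to the complemented system $S^{-1/2}(\cdot)$ together with a finite-dimensional exhaustion and the infinite-dimensional extension of the selection from \cite{Bo24}. Since each random variable yields exactly one point per subcube, separation is automatic.
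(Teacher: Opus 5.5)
There is a genuine gap at the step you call new, uniform discreteness. One sample per subcube does not make separation automatic, because samples drawn from adjacent subcubes can be arbitrarily close.

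Your remedies (keep one parity class, or draw $x$ only from a shrunk central subcube $Q''\subset Q'$) do give separation. But they break the identity the rest of your argument needs. With $x$ uniform in $Q''$, the expectation of $|Q'|\,\Psi(x)\otimes\Psi(x)$ is $\frac{|Q'|}{|Q''|}\int_{Q''}\Psi\otimes\Psi$, not $\int_{Q'}\Psi\otimes\Psi$. Since $\Psi$ is merely measurable, its values on $Q''$ say nothing about its values on $Q'\setminus Q''$. For example, $\Psi$ may vanish on every $Q''$, or put all its mass in some direction of $H$ into the discarded layers or parity class. Then neither the approximation of $S$ nor even a lower frame bound survives. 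You cannot have both ``$x$ uniform in $Q'$'' and separated samples.

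The missing idea, which is what the paper does, is that one point per cell only gives \emph{bounded local multiplicity}. By doubling (Lemma~\ref{partition_cardinality_lemma}), a ball of radius comparable to the cell size contains at most $(C_{R_A})^5$ samples, or $2(C_{R_A})^5$ after Theorem~\ref{frame_distinct_elements}. The paper then runs extra rounds of binary selection (Theorem~\ref{frame_uniformly_discrete_elements}). In each round, indices of samples at distance less than $2r$ (resp.\ $r$) are paired with each other, so every selector keeps at most one point of each close pair. Each cycle lowers the multiplicity by one, so after at most $2(C_{R_A})^5$ cycles the set is $r$-separated. The price is the weight $2^{2L}$ and a telescoping error, which is absorbed by starting with error $\epsilon\,2^{-4(C_{R_A})^5}$.

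The lower bound is a second gap. The Marcus--Spielman--Srivastava theorem for independent random vectors bounds $\|\sum_k v_kv_k^*\|$ from above for a single realization. The complement trick that yields lower bounds needs every piece of a partition to be bounded above simultaneously. A single one-point-per-cell realization has no complementary pieces to play that role.

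What is needed is the selector form of Weaver's $\text{KS}_2$ (Theorem~\ref{binary_selector_theorem}). It gives the two-sided bound $\|2^N\sum_{n\in I_b}T_n-T\|\le C\sqrt{2^N\delta}$ for every selector $I_b$. The paper makes this compatible with ``at most one point per cell'' in three steps:
\begin{enumerate}
\item It approximates $S_\Psi$ by dyadically weighted rank-one operators, with finitely many sampled values of $\Psi$ per cell (Lemma~\ref{sampling_lemma}).
\item It splits each $2^{-\ell_i}T_i$ into equal pieces and always pairs pieces from the same cell first. After $N$ halvings each selector then holds at most one piece per cell (Lemma~\ref{conditions_sampling_function_injective}).
\item It handles infinitely many cells in infinite dimensions by a block decomposition with subspaces $M_k$ and trace control (Theorem~\ref{frame_distinct_elements}).
\end{enumerate}
Your sketch gestures at the last two steps. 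However, it leans on the one-sided random-vector theorem exactly where the two-sided selector theorem is indispensable.
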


 A key ingredient in the proof of our main theorem is the selector form of Weaver's $\text{KS}_2$ conjecture, which was established by the first author in \cite{Bo24}. Weaver formulated $\text{KS}_r$ conjecture in \cite{We04}, where he showed that the Kadison-Singer problem has a positive solution if and only if $\text{KS}_r$ conjecture holds for some $r\geq 2$. The $KS_2$ conjecture was confirmed to be true by Marcus, Spielman and Srivastava in \cite{MSS15b}. For further details on the connection between Weaver's $\text{KS}_r$ and the Kadison-Singer problem, as well as related conjectures, we refer to \cite{Bo23}, \cite{Bo24}, \cite{BCMS19}, \cite{CCLV05}, \cite{CFTW06}, \cite{CT06}, \cite{MSS15a}, \cite{MSS15b}, and \cite{We04}.

 Thanks to the solution of the Kadison-Singer problem Nitzan, Olevskii, and Ulanovskii \cite{NOU16} showed the existence of frames of exponentials for unbounded sets. Freeman and Speegle \cite{FS19} extended their result by proving that every bounded continuous frame can be sampled to obtain a discrete frame. Using the selector form of Weaver's $\text{KS}_2$ conjecture \cite{Bo24} we improve upon the result of Freeman and Speegle in two aspects. We show that the sampling sequence can be chosen to be uniformly discrete and the resulting ratio of frame bounds is nearly the same as that of a continuous frame. Related results on sampling sequences near the critical density were obtained by the first author and van Velthoven \cite{MBV1}, \cite{MBV2} using selector techniques.

 We illustrate Theorem \ref{main_result_Lebesgue} for two well known and well studied continuous tight frames. The first application concerns \emph{Gabor systems}. Let $\Lambda\subseteq \R^{2d}$ be a lattice and let $g\in L^2(\R^d)$ be a nonzero function. The Gabor system associated with $g$ and $\Lambda$ is the set $$\Gc(g,\Lambda)\coloneq \set{e^{2\pi ibx}g(x-a)\,|\,(a,b)\in \Lambda}.$$ It is well known that if a window $g$ has a good time-frequency localization, then there exists a uniformly discrete set $\Lambda$, which is in fact a lattice, such that $\Gc(g,\Lambda)$ is a frame for $L^2(\R^d)$, see \cite[Chapter 6]{Gro01}. However, it was an open problem whether the same result holds for an arbitrary nonzero $g\in L^2(\R^d)$. Our next theorem provides a positive answer to this question.

\begin{theorem}\label{uniformly_discrete_Gabor_frame} Let $g\in L^2(\R^d)$ be a nonzero function. For any $\epsilon>0$, there exists a uniformly discrete subset $\Lambda\subseteq \R^{2d}$ such that $\Gc(g,\Lambda)$ is a frame for $L^2(\R^d)$ whose ratio of frame bounds is less than $1+\epsilon.$\qeddef
\end{theorem}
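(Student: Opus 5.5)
The plan is to apply Theorem \ref{main_result_Lebesgue} to the short-time Fourier transform viewed as a continuous tight frame indexed by $\R^{2d}$ with Lebesgue measure. Set $H=L^2(\R^d)$ and define $\Psi\colon\R^{2d}\to H$ by
\[
\Psi(a,b)=M_bT_a g,\qquad M_bT_ag(x)=e^{2\pi i b x}g(x-a).
\]
Then $\ip{f}{\Psi(a,b)}=V_gf(a,b)$ is the short-time Fourier transform of $f$ with window $g$.

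The first step is the Moyal orthogonality relation,
\[
\int_{\R^{2d}}|V_gf(a,b)|^2\,da\,db=\norm{g}^2\norm{f}^2 \qquad\text{for all } f\in L^2(\R^d).
\]
For fixed $a$, the function $b\mapsto V_gf(a,b)$ is the Fourier transform of $x\mapsto f(x)\overline{g(x-a)}$. This product lies in $L^1$, and also in $L^2$ for almost every $a$ by Fubini. Plancherel in $b$ then gives $\int|V_gf(a,b)|^2\,db=\int|f(x)|^2|g(x-a)|^2\,dx$, and integrating in $a$ yields the identity. To be safe, one first proves it for $f,g$ in a dense class such as Schwartz functions or $L^1\cap L^2$, and then extends by continuity in $f$, since both sides are continuous on $L^2$. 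Hence $\Psi$ is a continuous tight frame with $A=B=\norm{g}^2>0$, using that $g\neq0$.

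Next, $\Psi$ is measurable, and in fact continuous: $(a,b)\mapsto M_bT_ag$ is strongly continuous because translation and modulation are strongly continuous unitary groups on $L^2$. It is also bounded, since $\norm{\Psi(a,b)}^2=\norm{g}^2=:D$ for all $(a,b)$. All hypotheses of Theorem \ref{main_result_Lebesgue} therefore hold on $\R^{2d}$ with Lebesgue measure.

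Finally, given $\epsilon>0$, apply the tight case of Theorem \ref{main_result_Lebesgue} with $\epsilon/2$ in place of $\epsilon$, so that the strict inequality in the statement holds. This produces a uniformly discrete sequence $\set{(a_n,b_n)}\inN$ for which $\set{M_{b_n}T_{a_n}g}\inN$ is a frame with ratio of frame bounds at most $1+\epsilon/2<1+\epsilon$. Because the sequence is uniformly discrete, its points are distinct, so $\Lambda=\set{(a_n,b_n)}$ is a uniformly discrete set and $\Gc(g,\Lambda)$ is precisely the sampled frame.

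The only step needing real care is verifying the orthogonality relation for an arbitrary $g\in L^2$, without any localization assumption on $g$; the density and Fubini argument handles this. Everything else is direct bookkeeping. The real difficulty, the uniform discreteness combined with near tightness, is entirely absorbed by the main theorem.
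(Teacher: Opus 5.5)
Your proposal is correct and follows the same route as the paper. The paper likewise notes that the short-time Fourier transform is a continuous tight frame on $\R^{2d}$ with bound $\norm{g}^2$ and constant norm $\norm{g}^2$, checks that Lebesgue measure on $\R^{2d}$ satisfies (I)--(III), and invokes the main discretization theorem (Theorem \ref{main_result_Lebesgue}, equivalently Theorem \ref{frame_uniformly_discrete_elements}); your direct Fubini--Plancherel argument for the Moyal identity, which already works for all $f,g\in L^2$ without the density step, simply supplies the standard fact the paper cites from Gr\"ochenig.
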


By the Balian-Low Theorem for Gabor frames, see \cite[Corollary 3.7]{CDH99} and \cite[Theorem 1.5]{AFK14}, Theorem \ref{uniformly_discrete_Gabor_frame} is optimal. Indeed, if a window $g$ is well-localized in time and frequency, then no choice of sampling set $\Lambda$ yields a Riesz basis for $L^2(\R^d)$. 
Theorem \ref{uniformly_discrete_Gabor_frame} has an alternative interpretation in the language of frame set.
The \emph{frame set} associated with $g\in L^2(\R^d)$ is the set $$\Fc(g)=\set{\Lambda\subseteq\R^{2d}\,|\,\Gc(g,\Lambda)\text{ is a frame for }L^2(\R^d)}.$$ 
Characterizing the frame set of general functions in $L^2(\R^d)$ remains a challenging problem even for reduced frame sets consisting of lattice generating Gabor frames. Indeed, the reduced frame sets corresponding to lattice $\Lambda \subseteq \R^{2d}$ have been fully characterized only for small classes of windows, see \cite{BKL23}, \cite{Gro13}, \cite{Gro23}. As an immediate consequence of Theorem \ref{uniformly_discrete_Gabor_frame}, we obtain the following corollary regarding frame sets.
\begin{corollary}
    The frame set of every nonzero function $g\in L^2(\R^d)$ contains a uniformly discrete subset.
\end{corollary}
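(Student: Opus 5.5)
The corollary is an immediate consequence of Theorem \ref{uniformly_discrete_Gabor_frame}, so the plan is simply to unwind the definitions. Fix a nonzero $g\in L^2(\R^d)$ and apply Theorem \ref{uniformly_discrete_Gabor_frame} with any fixed $\epsilon>0$, say $\epsilon=1$. This produces a uniformly discrete set $\Lambda\subseteq\R^{2d}$ such that $\Gc(g,\Lambda)$ is a frame for $L^2(\R^d)$, with ratio of frame bounds less than $2$. The ratio bound is not needed here. The only thing that matters is that $\Gc(g,\Lambda)$ satisfies the frame inequality \eqref{frame_ineq} with some positive bounds $A\le B$.

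By definition of the frame set, this says exactly that $\Lambda\in\Fc(g)$. Since $\Lambda$ is uniformly discrete, $\Fc(g)$ contains a uniformly discrete set, which is how I read ``contains a uniformly discrete subset.'' Here $\Fc(g)$ is a collection of subsets of $\R^{2d}$, so the statement means that some member of $\Fc(g)$ is uniformly discrete.

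One bookkeeping point should be checked. In the definition of $\Gc(g,\Lambda)$ one should index by $(a,b)\in\Lambda$, so that distinct points give distinct indices and no element is repeated. This is consistent with how Theorem \ref{uniformly_discrete_Gabor_frame} is stated, so no extra argument is needed.

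There is no real obstacle, since all the substantive work lies in Theorem \ref{uniformly_discrete_Gabor_frame}. That theorem itself would come from Theorem \ref{main_result_Lebesgue} applied to the continuous Parseval (up to the constant $\norm{g}^2$) frame $(a,b)\mapsto e^{2\pi i b x}g(x-a)$ on $\R^{2d}$ with Lebesgue measure. This frame is bounded since $\norm{\Psi(a,b)}=\norm{g}$, and it is tight by the Moyal orthogonality relations for the short-time Fourier transform.
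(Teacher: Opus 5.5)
Your argument is correct and is the paper's own: the paper states the corollary as an immediate consequence of Theorem~\ref{uniformly_discrete_Gabor_frame}, because the uniformly discrete $\Lambda$ that theorem produces satisfies $\Lambda\in\Fc(g)$ by the definition of the frame set. Your reading of ``contains a uniformly discrete subset'' as ``has a uniformly discrete member'' is the intended one, and fixing $\epsilon=1$ is harmless since the ratio bound plays no role.
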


The second application involves the discretization of continuous wavelet transform \cite{Dau}, \cite{HW}. A function $\psi \in L^2(\R)$ is a continuous wavelet with respect to the affine $ax+b$ group $\R \rtimes \R^+$, if it satisfies the Calder\'om admissibility condition in the following: 
 \begin{equation}
 \label{admissibility_condition}
\int_{(-\infty,0)}\frac{|\widehat{\psi}(\xi)|^2}{|\xi|}\,d\xi
= \int_{(0,\infty)}\frac{|\widehat{\psi}(\xi)|^2}{|\xi|}\,d\xi<\infty.
\end{equation}
A discrete wavelet system associated with $\Gamma\subseteq\R\rtimes \R^+$ is the set $$\Wc(\psi,\Gamma)=\set{a^{1/2}\psi(ax-b)\,|\,(b,a)\in \Gamma}.$$
The question of which functions $\psi$ admit a uniformly discrete subset $\Gamma$ such that $\Wc(\psi,\Gamma)$ forms a frame was known only for particular classes of functions \cite{CMR}, \cite{GHHLWW}, \cite{M04}. As a consequence of the variant of Theorem \ref{main_result_Lebesgue} for metric measure spaces, we obtain the following result. 

 \begin{theorem}\label{wave}
 Suppose that $\psi\in L^2(\R)$ satisfies the Calder\'on condition \eqref{admissibility_condition}. For any $\epsilon>0$, there exists a uniformly discrete subset $\Gamma\subseteq\R\rtimes \R^+$ such that $\Wc(\psi,\Gamma)$ is a frame for $L^2(\R)$ whose ratio of frame bounds is less than $1+\epsilon.$
     \qeddef
 \end{theorem}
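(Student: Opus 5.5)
The plan is to realize the continuous wavelet transform as a bounded continuous tight frame on the affine group, and then apply the metric-measure-space version of Theorem \ref{main_result_Lebesgue} announced above.

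\textbf{Step 1: the group and its metric measure structure.} Let $X=\R\rtimes\R^+=\set{(b,a)\,|\,b\in\R,\ a>0}$. Equip it with a left Haar measure $d\mu=da\,db/a^2$, up to the convention matching $a^{1/2}\psi(ax-b)$, and with a left-invariant Riemannian metric $d$, e.g.\ the hyperbolic metric of the upper half-plane. Then $X$ is a homogeneous space of a Lie group with $\mu(X)=\infty$. Balls of radius $r\le 1$ are uniformly comparable, by left translation, to a fixed Euclidean-type ball at the identity, so $\mu(B(x,r))\asymp r^2$ uniformly in $x$ for small $r$. This gives both the doubling condition and upper Ahlfors regularity at small scales. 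Note the doubling is only at small scales, since the group has exponential volume growth; this is exactly why the paper formulates its hypotheses ``at small scale.''

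\textbf{Step 2: the tight continuous frame.} Set $\Psi(b,a)(x)=a^{1/2}\psi(ax-b)$. By Plancherel, $\widehat{\Psi(b,a)}(\xi)=a^{-1/2}\widehat\psi(\xi/a)e^{-2\pi i b\xi/a}$, with the $b$-dependence possibly adjusted by convention. Integrating $|\ip{f}{\Psi(b,a)}|^2$ in $b$ first gives, by Plancherel in $b$, an expression of the form $\int |\hat f(\xi)|^2|\widehat\psi(\xi/a)|^2\,(\cdots)\,d\xi$. Integrating then in $a$ and changing variables gives
\[
\int_X |\ip{f}{\Psi(b,a)}|^2\,d\mu = \int_{\R}|\hat f(\xi)|^2\, c(\operatorname{sgn}\xi)\,d\xi,
\qquad c(\pm)=\int_{\R^\pm}\frac{|\widehat\psi(\eta)|^2}{|\eta|}\,d\eta .
\]
The Calder\'on condition \eqref{admissibility_condition} says precisely that $c(+)=c(-)<\infty$, so $\Psi$ is a continuous tight frame with bound $c$. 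One has to be careful to choose the Haar measure normalization so that the Jacobian works out; I would simply pick whichever power of $a$ makes this calculation close. Since $\psi\neq0$ and the two halves agree, $c>0$.

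\textbf{Step 3: boundedness and measurability.} We have $\norm{\Psi(b,a)}=\norm{\psi}_2$ for all $(b,a)$, so $\Psi$ is bounded with $D=\norm\psi^2$. Moreover $\Psi$ is continuous, hence measurable, by strong continuity of translations and dilations on $L^2$.

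\textbf{Step 4: conclusion.} Applying the metric-measure version of Theorem \ref{main_result_Lebesgue} with ratio $B/A=1$ yields points $x_n=(b_n,a_n)$ with $\inf_{n\ne m}d(x_n,x_m)\ge r>0$ such that $\set{\Psi(x_n)}$ has frame-bound ratio less than $1+\epsilon$. We then take $\Gamma=\set{x_n}$.

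The main obstacle is interpretive: ``uniformly discrete'' must be measured in the invariant metric $d$ used in the theorem, not in the Euclidean metric of $\R\times\R^+$. If the Euclidean sense is required, observe that hyperbolic separation $r$ at small $a$ does not force Euclidean separation. In that case I would first attempt to restrict to points with $a\ge a_0$, or compare metrics on compacts. This is not automatic, so I would adopt the group metric, which is the natural reading, and verify the uniform small-scale Ahlfors and doubling estimates carefully via left invariance.
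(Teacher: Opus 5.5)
Your proposal is correct and is essentially the paper's argument: view $(b,a)\mapsto a^{1/2}\psi(ax-b)$ as a bounded continuous tight frame on $\R\rtimes\R^+$, with $D=\norm{\psi}^2$ and frame bound equal to the common Calder\'on integral. You then check (I)--(III) from the fact that balls of radius $r$ in an invariant (hyperbolic) metric have Haar measure independent of the center and of order $r^2$ for small $r$, and apply Theorem \ref{frame_uniformly_discrete_elements}, with uniform discreteness understood in that group metric.

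Your caution about conventions is well placed. The paper pairs $a^{-2}\,db\,da$ with the hyperbolic metric in the coordinates $(b,a)$, which fits the parametrization $a^{-1/2}\psi((x-b)/a)$. For $a^{1/2}\psi(ax-b)$ the tight-frame measure is $a^{-1}\,db\,da$ instead. So one should indeed take the left Haar measure and a left-invariant metric for the group law actually induced by the representation, as you propose.
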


Theorems \ref{uniformly_discrete_Gabor_frame} and \ref{wave} are merely two examples of the versatility of our main result. Another setting where Theorem \ref{main_result_Lebesgue} applies are the generalized Paley-Wiener spaces associated with elliptic operators, which were recently introduced and studied by Gr\"ochenig and Klotz \cite{Gro23}.

The paper is structured as follows. Notations, definitions, and several required preliminary known results are presented in Section \ref{Preliminaries}. 
Our main result for continuous frames on metric measure spaces, Theorem \ref{frame_uniformly_discrete_elements}, is established in Section \ref{Main_results}. We conclude this paper by presenting  applications of our main result in Section \ref{application}.

\section{Preliminaries}
\label{Preliminaries}
Throughout this paper, $H$ is a separable infinite-dimensional Hilbert space equipped with inner product $\ip{\cdot}{\cdot}.$ 
Given a continuous frame $\Psi$, we always assume the underlying measure space associated with $\Psi$, denoted by  $(X,\Sigma,\mu)$, is a positive and non-atomic measure space. Furthermore, by \cite[Proposition 2.1]{Bo18b}, the support of $\Psi$ (that is, $\set{t\in X\,|\,\Psi(t)\neq 0}$) is a $\sigma$-finite subset of $X$. Hence, without loss of generality, we assume that $X$ itself is $\sigma$-finite. 

We will primarily work under the following three assumptions.
    \begin{enumerate}\setlength\itemsep{0.2em}
    \item [\textup{(I)}] $\mu(X)=\infty$.
    \end{enumerate}
    In the case the space $X$ is equipped with a metric $d$, all balls centered at $x\in X$ with radius $r$, $B_r(x)$, are assumed to have a positive measure for every $r>0$ and every $x\in X.$ We will also make use of the following two assumptions. There exists some $R_A>0$ such that 
    \begin{enumerate}\setlength\itemsep{0.2em}
    \item [\textup{(II)}] (Doubling condition at small scale) for any $0<r\leq R_A$ we have 
    $$\mu(B_{2r}(x))\leq C_{R_A}\mu(B_{r}(x))\quad \text{ for all $x\in X$},$$
    for some universal positive \emph{doubling constant} $C_{R_A}$.
   \item [\textup{(III)}] (Upper Ahlfors regularity condition at small scale) there exist some positive constants $\alpha,\beta>0$ such that for any $x\in X$ and $0< r\leq R_A$ we have $\mu(B_r(x))\Le \alpha r^\gamma.$
       \end{enumerate}

\begin{remark}
We remark that the assumption that all balls have positive measure $\mu(B_r(x))>0$ is made merely for convenience. Indeed, if $0<r<R_A$, then the doubling condition (II) implies that the set
\[
X'=\{x \in X \,|\, \mu(B_r(x))=0 \}
\]
is both closed and open subset of $X$. Thus, given a continuous frame $\Psi: X \to H$, it suffices to perform a discretization of its restriction $\Psi|_{X\setminus X'}$ to the subspace $X \setminus X'$.
\end{remark}

The following lemma is a consequence of the doubling condition and the upper Ahlfors regularity condition. 

\begin{lemma}
\label{partition_cardinality_lemma}
Let $(X,d,\mu)$ be a metric measure space equipped with a positive non-atomic Borel measure $\mu$ satisfying assumptions (II) and (III). Then for any $0<r\leq\frac{R_A}{4}$, there exists a countable partition $\set{X_n}\inN$ of $X$ satisfying the following two statements:
\begin{enumerate}\setlength\itemsep{0.5em}
 \item [\textup{(a)}] $\mu(X_n)\leq \alpha (2r)^\gamma$ for all $n\in\N$,
  \item [\textup{(b)}] If $\set{x_n}\inN\subseteq X$ is a sequence such that $x_n\in X_n$ for all $n\in\N$, then 
  $$\#(B_{2r}(x_m)\cap \{x_n\,|\,n\in\N\})\Le (C_{R_A})^5\quad \text{for all $m\in\N$,}$$
  where $C_{R_A}$ is the doubling constant. 
\end{enumerate} 
\end{lemma}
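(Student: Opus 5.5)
We plan to build the partition from a maximal $r$-separated set. By Zorn's lemma, or in the separable/$\sigma$-finite setting by a greedy construction, choose a maximal set $\set{y_n}\inN\subseteq X$ with $d(y_n,y_m)\ge r$ for $n\ne m$. We will check below that this set is countable. Maximality gives $X=\bigcup_n B_r(y_n)$. We then set
\[
X_n = B_r(y_n)\setminus \bigcup_{k<n} B_r(y_k).
\]
Discarding empty pieces, this is a countable Borel partition of $X$ with $X_n\subseteq B_r(y_n)$. Property (a) follows at once from (III), since $r\le R_A$:
\[
\mu(X_n)\le \mu(B_r(y_n))\le \alpha r^\gamma\le \alpha(2r)^\gamma.
\]

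For countability, note that the balls $B_{r/2}(y_n)$ are pairwise disjoint. Fix a point $x_0$ and a radius $R$. By (III), every $B_{r/2}(y_n)$ has finite measure. The doubling condition gives $\mu(B_{4r}(y_n))\le C_{R_A}^3\mu(B_{r/2}(y_n))$, using that $4r\le R_A$. So the disjoint balls have measure bounded below, relative to the enlarged balls, and only finitely many centers can lie in any ball $B_R(x_0)$ of finite measure. If finiteness of measure of large balls is not directly available, we use $\sigma$-finiteness together with the local counting argument below to conclude that any $r$-separated set is countable, since $X$ is covered by countably many small balls each containing boundedly many $y_n$. This countability is a technical point; we expect it to be settled by the same counting argument as (b).

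For (b), let $x_n\in X_n$ be arbitrary. Then $d(x_n,y_n)<r$, and the map $n\mapsto x_n$ is indexed by the distinct $y_n$. Fix $m$ and suppose $x_n\in B_{2r}(x_m)$. Then
\[
d(y_n,x_m)<3r, \qquad\text{so}\qquad B_{r/2}(y_n)\subseteq B_{4r}(x_m).
\]
The balls $B_{r/2}(y_n)$ are pairwise disjoint, so
\[
\sum_{n:\,x_n\in B_{2r}(x_m)} \mu(B_{r/2}(y_n))\le \mu(B_{4r}(x_m)).
\]
Next we bound each term from below. For such $n$ we have $B_{4r}(x_m)\subseteq B_{8r}(y_n)$, because $3r+4r<8r$. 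Doubling four times from $r/2$ to $8r$ gives
\[
\mu(B_{8r}(y_n))\le C_{R_A}^4\mu(B_{r/2}(y_n)).
\]
Each step uses a radius at most $R_A$: the last step doubles from $4r\le R_A$. Hence $\mu(B_{r/2}(y_n))\ge C_{R_A}^{-4}\mu(B_{4r}(x_m))$. The quantity $\mu(B_{4r}(x_m))$ is finite by doubling combined with (III), and positive by assumption. Therefore the number of such $n$ is at most $C_{R_A}^4\le C_{R_A}^5$, because $C_{R_A}\ge1$.

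The main obstacle is keeping every doubling step within the scale $R_A$. This is why we need $r\le R_A/4$, and why the chain of radii from $r/2$ to $8r$ must be arranged so that each doubled radius starts at most at $4r$. The extra factor of $C_{R_A}$ in the bound $C_{R_A}^5$ gives slack if the geometric constants need slight adjustment, for example when using closed rather than open balls.
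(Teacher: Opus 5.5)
Your argument is correct and is essentially the paper's: a maximal separated family followed by a disjointness-plus-doubling volume count. You take an $r$-separated set, i.e.\ disjoint balls $B_{r/2}(y_n)$, where the paper takes disjoint balls $B_r(y_n)$; working at half the scale even gives the slightly sharper bounds $\mu(X_n)\le\alpha r^\gamma$ and $(C_{R_A})^4$.

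The only weak spot is your countability paragraph. There is no uniform lower bound on $\mu(B_{r/2}(y_n))$, so the claim that only finitely many $y_n$ lie in a ball of finite measure is unjustified (and can fail). The covering by countably many small balls also presupposes separability. Instead, argue as the paper does: the $B_{r/2}(y_n)$ are pairwise disjoint sets of positive measure in a $\sigma$-finite space, hence there are only countably many of them.
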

\begin{proof}
Let $\set{B_{r}(y_n)}\inN$ be a maximal collection of pairwise disjoint balls with radius $r$. The existence of such a countable collection is ensured since $X$ is $\sigma$-finite and all balls have positive measure. Note that this implies that $X=\bigcup_{n\in\N}B_{2r}(y_n)$. Then we let $\set{X_n}\inN$ be a partition of $X$ such that $$B_{r}(y_n)\subseteq X_n\subseteq B_{2r}(y_n) \quad \text{for all }n\in\N.$$ 
Next, let $\set{x_n}\inN$ be a sequence in $X$ such that $x_n\in X_n$ for all $n\in\N.$ Fix $m\in\N$ and define 
$$I_m = \bigset{x_n \,|\,x_n\in B_{2r}(x_m)} \quad\text{and}\quad  M_m=\min\limits_{x_n\in I_m}\mu(X_n).$$
If $ x_n\in B_{2r}(x_m)$, then $d(y_n,x_m) \le d(y_n,x_n)+d(x_n,x_m) <4r$, and hence we have $X_n\subseteq B_{6r}(x_m).$ Since $X_n$ are pairwise disjoint, it follows that  
\begin{equation}
\label{cardinality_estimateI}
\#(I_m)M_m\Le  \mu(B_{6r}(x_m))\Le (C_{R_A})^2\mu(B_{2r}(x_m))
\end{equation}
On the other hand, since $d(x_m,y_n) \le d(x_m,x_n)+d(x_n,y_n)<4r$, we also have $B_{2r}(x_m)\subseteq B_{6r}(y_n)$. Consequently, we obtain 
\begin{equation}
\label{cardinality_estimateII}
\mu(B_{2r}(x_m))\Le \mu(B_{6r}(y_n))\Le (C_{R_A})^3\mu(B_{r}(y_n))\Le (C_{R_A})^3\mu(X_n). 
\end{equation}
Hence, $M_m\geq (C_{R_A})^{-3}\mu(B_{2r}(x_m)).$
It follows by (\ref{cardinality_estimateI}) that 
$$(C_{R_A})^{-3}\#(I_m)\mu(B_{2r}(x_m))\Le \#(I_m)M_m\Le (C_{R_A})^2\mu(B_{2r}(x_m)),$$
which implies $\#I_m\leq (C_{R_A})^5.$ Thus, $\#(B_{2r}(x_m)\cap\{x_n\,|\,n\in\N\})\Le (C_{R_A})^5$ for all $m\in \N$.
\end{proof}

We will need the following elementary result, which can be found in \cite[Lemma 7.2]{Bo24}. Recall that a positive semi-definite operator $T$ defined on $H$ is a linear operator on $H$ satisfying $\ip{Tx}{x}\geq0$ for all nonzero $x\in H.$ For a given closed subspace $M$ of $H$, let $P_M$ be the orthogonal projection onto $M$.

\begin{lemma}
\label{elementary_Hilbert_result}
Let $T$ be a positive semi-definite operator defined on $H$.  Then for any closed subspace $M$ of $H$, there exists some constant $C_M= \bigparen{\norm{P_MTP_M}~\norm{P_{M^\perp}TP_{M^\perp}}}^{1/2}$ such that  
$$-C_M\textbf{I}\Le T-P_MTP_M-P_{M^\perp}TP_{M^\perp}\Le C_M\textbf{I}.$$
\end{lemma}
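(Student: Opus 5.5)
Write $P=P_M$, $Q=P_{M^\perp}$ and $S=T-PTP-QTQ$. Since $P+Q=\mathbf{I}$, we can expand
\[
T=(P+Q)T(P+Q)=PTP+PTQ+QTP+QTQ,
\]
which gives $S=PTQ+QTP$. So $S$ is the off-diagonal part of $T$ in the decomposition $H=M\oplus M^\perp$. It is self-adjoint, because $T$ is positive semi-definite and hence self-adjoint, so $(PTQ)^*=QTP$. The plan is to prove the quadratic-form estimate $|\ip{Sx}{x}|\le C_M\norm{x}^2$ for every $x\in H$. For a self-adjoint operator this is exactly the two-sided bound $-C_M\mathbf{I}\le S\le C_M\mathbf{I}$.

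The main step is a Cauchy--Schwarz inequality for the positive sesquilinear form $[u,v]=\ip{Tu}{v}$. Fix $x$ and write $u=Px$, $v=Qx$. Then
\[
\ip{Sx}{x}=\ip{Tv}{u}+\ip{Tu}{v}=2\,\mathrm{Re}\,\ip{Tu}{v}.
\]
Since $T\ge 0$, the form $[\cdot,\cdot]$ is positive semi-definite, so Cauchy--Schwarz applies:
\[
|\ip{Tu}{v}|\le \ip{Tu}{u}^{1/2}\ip{Tv}{v}^{1/2}.
\]
Because $u=Pu$, we have $\ip{Tu}{u}=\ip{PTPu}{u}\le \norm{PTP}\,\norm{u}^2$. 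In the same way, $\ip{Tv}{v}\le\norm{QTQ}\,\norm{v}^2$.

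Combining these estimates gives
\[
|\ip{Sx}{x}|\le 2\,C_M\norm{u}\,\norm{v}\le C_M\bigparen{\norm{u}^2+\norm{v}^2}=C_M\norm{x}^2 .
\]
The middle inequality is the AM--GM inequality $2ab\le a^2+b^2$, and the last equality is Pythagoras, since $u\perp v$ and $u+v=x$. This is the desired bound.

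There is no real obstacle in this argument. The one point to watch is the constant. A naive estimate $\norm{PTQ}\le\norm{T}$ would lose the geometric-mean form of $C_M$. To keep it, one must apply Cauchy--Schwarz to the form induced by $T$, and not the ordinary one on $H$. It is also this step that uses the positivity of $T$.
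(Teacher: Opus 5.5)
Your proof is correct and complete. The reduction to $S=PTQ+QTP$, the identity $\ip{Sx}{x}=2\,\mathrm{Re}\,\ip{Tu}{v}$, Cauchy--Schwarz for the positive form $\ip{T\cdot}{\cdot}$, and $2ab\le a^2+b^2$ together give $|\ip{Sx}{x}|\le C_M\norm{x}^2$. For self-adjoint $S$ this is exactly the two-sided operator bound; deriving self-adjointness of $T$ from positivity uses that $H$ is complex, as it is throughout the paper.

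There is no proof in the paper to compare against: the paper does not prove this lemma but quotes it from \cite[Lemma 7.2]{Bo24}, and your argument supplies a self-contained proof with exactly the stated constant $C_M$.
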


Following \cite{Bo24}, we define the notion of \emph{binary selectors}. Given any $N\in \N$, by $\set{0,1}^N$ we mean the set of all $N$-tuples, where each component is either $0$ or $1$. For any subset $J$, we use $\#J$ to denote the number of elements in $J.$
\begin{definition}
     Let $I$ be a countable index set and let $\set{J_k}_{k\in J}$ be any partition of $I$ with $\#J_k=2$ for all $k\in J$. Binary selectors of order $1$ are sets $I_0$ and $I_1$ such that $I = I_0 \cup I_1$ and $$\#(J_k\cap I_0)=\#(J_k\cap I_1)=1  \quad \text{for all } k\in J.$$
For $N\geq 2$, we define binary selectors of order $N$ inductively. Suppose that binary selectors of order $N-1$, $I_b$, $b\in \set{0,1}^{N-1}$, are already defined. For each $b\in \set{0,1}^{N-1}$, let $\set{J_{b,k}}_{k\in J}$ be any partition of $I_b$ with $\#J_{b,k}=2$ for all $k\in J$. Then binary selectors of order $N$ are sets $I_{b0}$ and  $I_{b1}$ satisfying $I_b=I_{b0}\cup I_{b1}$ and $\#(I_{b0}\cap J_{b,k})=\#(I_{b1}\cap J_{b,k})=1$ for all $k\in J.$ \qeddef 
\end{definition}

We now state the main tool in the proof of our main theorem. A positive semi-definite operator $T$ on $H$ is said to be \emph{trace-class} if there exists some orthonormal basis $\set{e_n}\inN$ for $H$ such that $$\text{tr}(T)\coloneq\sumli \ip{Te_n}{e_n}<\infty.$$
\begin{theorem}(\cite[Theorem 5.3]{Bo24})
\label{binary_selector_theorem}
    Let $\delta>0$ and let $\set{T_n}\inN$ be a family of positive semi-definite trace-class operators defined on $H$. Assume that $$T\coloneq \sumli T_n\Le \textbf{I} \quad \text{ and }\quad\text{tr}(T_n)\leq \delta\text{ for all }n\in\N$$
    Then there exists some absolute constant $C>0$ such that for any $N\in\N$ with $2^N<\frac{1}{\delta}$ and any intermediate choices of partitions with sets of size $2$ there exist binary selectors $I_b$, $b\in \set{0,1}^N$, that form a partition of $\N$ satisfying \begin{align}
    \Bignorm{2^N\sum_{n\in I_b}T_n-T}\Le C\sqrt{2^N\delta},  \end{align}
 for all $b\in\set{0,1}^N$. 
\end{theorem}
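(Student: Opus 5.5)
The plan is to obtain the order-$N$ statement by iterating a single \emph{one-step selector} result $N$ times, rescaling after each step, and summing the errors as a geometric series. The one-step statement I would aim for is this: if $\{S_n\}$ are positive semi-definite trace-class operators with $S:=\sum_n S_n\le \textbf{I}$ and $\text{tr}(S_n)\le\eta$, and $\{J_k\}$ is any partition of the index set into pairs, then there is a selector $I_0$, with complement $I_1$, such that
\[
\Bignorm{2\sum_{n\in I_i}S_n - S}\Le C_0\sqrt{\eta}\qquad (i=0,1).
\]

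To prove the one-step statement I would use Marcus--Spielman--Srivastava in its random-vector form. For each pair $J_k=\{a,b\}$, define a random operator on $H\oplus H$ equal to $2(S_a\oplus S_b)$ or $2(S_b\oplus S_a)$, each with probability $1/2$. The expectation of the sum over $k$ is $S\oplus S\le \textbf{I}$, and each summand has trace at most $4\eta$. Writing each $S_n$ as a sum of rank-one operators, I would apply the MSS bound in the version for independent random positive semi-definite matrices of finite rank (the Cohen/Brändén extension, or Weaver's $\text{KS}_2$ in the same spirit). This gives a realization with
\[
\Bignorm{\sum_k (\text{realization})}\le 1+C\sqrt{\eta}.
\]
The two diagonal blocks are $2\sum_{I_0}S_n$ and $2\sum_{I_1}S_n$. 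Each is at most $(1+C\sqrt\eta)\textbf{I}$, and they sum to $2S$. Hence $2\sum_{I_0}S_n - S = S-2\sum_{I_1}S_n \ge -C\sqrt\eta\,\textbf{I}$, and by symmetry we get the two-sided bound.

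Two technical issues must be handled in the one-step statement. First, MSS is finite-dimensional with finitely many summands. I would truncate to finitely many pairs and compress to finite-rank projections increasing to $\textbf{I}$. Because there are only finitely many choices on each finite set of pairs, a diagonal (compactness) argument over the product space $\{0,1\}^{\N}$ then yields a single selector. Strong-operator limits preserve the norm bound, since the bound is a two-sided operator inequality tested against fixed vectors. Second, the bound must hold for the given operators $S_n$ rather than only for rank-one ones; this is why the finite-rank random-matrix version is needed.

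For the iteration, suppose the order-$j$ selectors satisfy $\norm{2^j\sum_{I_b}T_n - T}\le e_j$. Set $S_n = 2^jT_n/(1+e_j)$ for $n\in I_b$. Then $\sum_{I_b} S_n\le \textbf{I}$ and $\text{tr}(S_n)\le 2^j\delta$. The one-step result, applied with the prescribed pair partition $\{J_{b,k}\}$ of $I_b$, gives
\[
\Bignorm{2^{j+1}\sum_{I_{bi}}T_n - 2^j\sum_{I_b}T_n}\Le (1+e_j)C_0\sqrt{2^j\delta}.
\]
Telescoping gives $e_N\le \sum_{j<N}(1+e_j)C_0\sqrt{2^j\delta}$. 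Since $2^N\delta<1$, I would show by induction that $e_j\le C_1$ for a uniform constant $C_1$. The sum is then a geometric series with ratio $\sqrt2$, dominated by its last term, so $e_N\le C\sqrt{2^N\delta}$. A small technicality arises if $C_0\sqrt{2^j\delta}$ is not small at the last few steps; the bound then holds trivially because $\norm{2^N\sum_{I_b} T_n}\lesssim 1$. I expect the main obstacle to be the one-step selector in infinite dimensions with higher-rank $T_n$, namely the passage from MSS to the limit. The iteration itself is routine bookkeeping.
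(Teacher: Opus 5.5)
Your overall scheme (a two-sided one-step selector, rescaled and iterated, with the errors summed geometrically) is consistent with the iterative bookkeeping the paper indicates for $C$ through the sequence $B_j$. Your iteration is also fine: $1+e_{j+1}=(1+e_j)(1+C_0\sqrt{2^j\delta})$ gives $1+e_N\le\exp\bigl(C_0(\sqrt2+1)\bigr)$ directly, and the worry about the last steps is moot because $2^j\delta<1$.

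The gap is in the one-step lemma, exactly where you derive the lower bound. Put $X_i=2\sum_{n\in I_i}S_n$. From $X_0,X_1\le(1+C\sqrt\eta)\textbf{I}$ and $X_0+X_1=2S$ you only get $X_0-S=S-X_1\ge S-(1+C\sqrt\eta)\textbf{I}$. This is $\ge-C\sqrt\eta\,\textbf{I}$ only if $S=\textbf{I}$. That restriction is not cosmetic. The theorem assumes only $T\le\textbf{I}$, and even for $T=\textbf{I}$ your iteration applies the lemma to $\sum_{n\in I_b}S_n=2^j\sum_{n\in I_b}T_n/(1+e_j)$, which is not the identity. Nor does the MSS conclusion exclude bad selectors. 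On $\C^2$ take pairs $\{a_k,b_k\}$, $1\le k\le m$, with $S_{a_k}=\frac1{2m}e_1e_1^*$ and $S_{b_k}=\frac1{2m}e_2e_2^*$, so that $S=\frac12\textbf{I}$ and $\eta=\frac1{2m}$. Selecting every $a_k$ gives the realization $e_1e_1^*\oplus e_2e_2^*$ of norm $1$, which meets your norm bound, yet $\norm{X_0-S}=\frac12$.

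The information you need is exactly what you discarded. The MSS/Cohen theorem requires $\sum_k\mathbb{E}A_k$ to equal the identity, so you must pad, and deterministic padding is then forced to total $(\textbf{I}-S)\oplus(\textbf{I}-S)$. After compressing by your finite-rank projections $P_m$, this is $(P_m-P_mS^{(m)}P_m)\oplus(P_m-P_mS^{(m)}P_m)$, where $S^{(m)}$ is the sum over the first $m$ pairs. It is positive semi-definite because $S^{(m)}\le\textbf{I}$, and it splits into finitely many deterministic pieces of trace at most $4\eta$. If you keep this term, the realization bound reads $X_i+(\textbf{I}-S)\le(1+C\sqrt\eta)\textbf{I}$, i.e.\ $X_i\le S+C\sqrt\eta\,\textbf{I}$ for $i=0,1$. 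This is an upper bound relative to $S$ rather than to $\textbf{I}$, and now complementarity does give $X_0-S=S-X_1\ge-C\sqrt\eta\,\textbf{I}$. Equivalently, in selector language, adjoin pairs $\{R_m,R_m\}$ of identical small-trace operators with $\sum_mR_m=(\textbf{I}-S)/2$: the enlarged family sums to $\textbf{I}$ and each half receives exactly $(\textbf{I}-S)/2$. With this change, your truncation/compactness passage and the iteration go through as you describe.
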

We remark that Theorem \ref{binary_selector_theorem} also applies to any finite collection of positive semi-definite trace-class operators by considering $T_n$ the zero operator for all $n$ large enough. Moreover, fix $\delta>0$ and define the sequence of scalars $(B_n)_{n=0}^\infty$ by 
$$B_0=1, \quad B_{j+1}=B_j+4\sqrt{2^j\delta B_j}+2^{j+1}\delta,~j\geq1.$$
The absolute constant $C$ in Theorem \ref{binary_selector_theorem} is chosen so that 
\[
\sum_{j=0}^{N-1}(B_j-1)\Le C\sqrt{2^N\delta},
\]
for every $N\in \N$ such that $2^N\delta<1$. See \cite[Lemma 5.2]{Bo24} or \cite[Lemma 10.20]{OU16} for a proof. 

Every continuous frame $\Psi$ induces a positive semi-definite bounded operator, called the \emph{frame operator}. The frame operator associated with $\Psi$ is the operator on $H$ is defined by 
\begin{equation}
\label{frame_operator}
S_\Psi(f)=\int_X\ip{f}{\Psi(t)}\Psi(t)\,d\mu(t).
\end{equation}
It is worth noting that Equation (\ref{frame_operator}) should be interpreted weakly in terms of Pettis integral. That is, for all $f\in H$, $S_\Psi(f)$ is the unique vector such that $$\ip{S_\Psi(f)}{g}=\int_X\ip{f}{\Psi(t)}\ip{\Psi(t)}{g}\,d\mu(t)\quad \text{for all }g\in H.$$
Every element $f\in H$ also induces a positive semi-definite rank one operator $T_f$ on $H$ defined by \begin{equation}
\label{rank_one_operator}
T_f(g)\coloneq \ip{g}{f}f.
\end{equation}
For notational convenience, we denote by $S_\Psi$ the frame operator associated with a continuous frame $\Psi.$ Throughout, $T_f$ means the rank one operator defined in Equation (\ref{rank_one_operator}) whenever $f$ is an element in $H$.

\section{Main Result}
\label{Main_results}
We now prove our main result in this section. We begin with the motivation behind assumptions (I)--(III). As we will see in the proof of Lemma \ref{sampling_lemma}, obtaining a frame by sampling a continuous frame $\Psi$ associated with a positive, non-atomic finite measure space is straightforward, as long as $\Psi$ is bounded above $\mu$-almost everywhere. For this reason, our primary focus will be the case $\mu(X)=\infty.$ The upper Ahlfors assumption (III) ensures that, during the sampling process, we can use balls to separate points, while maintaining control over their measures. The doubling assumption (II) allows us to bound the maximum number of sample points contained in certain small ball by Lemma \ref{partition_cardinality_lemma}, which in turn enables us to extract a subsequence that is uniformly discrete. Without the second and third assumptions, we can still obtain frames with arbitrarily close frame bounds consisting of distinct elements, but the resulting sampled frames need not be uniformly discrete

The following lemma shows that we can sample a continuous frame $\Psi$, so that resulting collection can be rescaled to form a frame, while simulateneously ensuring that the sampled elements are not too densely distributed in $X$.

\begin{lemma}
\label{sampling_lemma}
     Let $\Psi\colon X\rightarrow H$ be a continuous frame with an upper frame bound $B$.  Assume that $\norm{\Psi(t)}^2\Le D$ $\mu$-almost everywhere and $\mu(X)=\infty$. Then for any $0<\tilde{\epsilon}, \epsilon<1$ there exists a partition $\set{X_n}\inN$ of $X$ and a sequence $\set{x_n}\inN\subseteq X$ such that the following statements hold:
     \begin{enumerate}\setlength\itemsep{0.5em}
    \item [\textup{(a)}] $\mu(X_n)\leq \tilde{\epsilon}$ for all $n\in \N$,

      \item [\textup{(b)}] $\norm{\Psi(x_n)}^2\Le D$ for all $n\in \N$,
    \item [\textup{(c)}] There exists an increasing sequence of natural numbers $(L_n)\inN$ with $L_1=0$ such that for each $n\in \N$ we have $\set{x_k}_{k=L_n+1}^{L_{n+1}}\subseteq X_n$, 
    \item [\textup{(d)}] There exists a sequence of natural numbers $(\ell_n)\inN$ with $\sum_{k=L_n}^{L_{n+1}-1}2^{-\ell_k}\leq \mu(X_n)$ for all $n\in\N$ such that $$\Bignorm{S_\Psi-\sum_{n\in\N}2^{-\ell_n}T_{\Psi(x_n)}}<\epsilon.$$
    \end{enumerate}
Furthermore, if $X$ is a metric space equipped with a metric $d$ and $(X,\mu)$ satisfies assumptions (II) and (III), then the following statements also hold:
     \begin{enumerate}\setlength\itemsep{0.5em}
\item [\textup{(e)}] There exists some constant $R\coloneq \min\bigparen{\frac{1}{2}(\frac{\tilde{\epsilon}}{\alpha})^{1/\gamma},R_A}$ such that $$\mu(X_n)\leq \alpha(2R)^\gamma\quad \text{ for all }n\in \N,$$
     
    \item [\textup{(f)}] Let $\set{y_n}\inN\subseteq X$ be any sequence such that $y_n\in X_n$ for all $n\in \N$. Then we have  
    $$\#(B_{\frac R2}(y_n)\cap \{y_k \,|\, k\in \N\})\leq (C_{R_A})^5\quad \text{for all $n\in \N$},$$
    where $C_{R_A}$ is the doubling constant.
\end{enumerate}
    
\end{lemma}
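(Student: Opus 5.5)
We first choose the partition. In the general case we use $\sigma$-finiteness and non-atomicity of $\mu$ to write $X$ as a countable disjoint union of sets of measure at most $\tilde\epsilon$. Such pieces exist because a non-atomic measure takes every intermediate value on subsets of a finite-measure set. In the metric case we instead take $r=R/4$ with $R=\min\bigparen{\frac12(\tilde\epsilon/\alpha)^{1/\gamma},R_A}$ and apply Lemma \ref{partition_cardinality_lemma}. This gives $\mu(X_n)\le \alpha(2r)^\gamma\le \alpha(2R)^\gamma\le\tilde\epsilon$, so (a) and (e) hold. Statement (f) follows from part (b) of that lemma, since $B_{R/2}=B_{2r}$.

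Next, on each $X_n$ the operator $S_n:=\int_{X_n}T_{\Psi(t)}\,d\mu(t)$ is positive and trace class, with $\text{tr}(S_n)\le D\mu(X_n)$. We also have $S_\Psi=\sum_n S_n$, with the series converging in the strong (weak) sense. The plan is to approximate each $S_n$ in trace norm by a finite dyadic Riemann sum $\sum_k 2^{-\ell_k}T_{\Psi(x_k)}$ with $x_k\in X_n$, up to an error $\epsilon 2^{-n-1}$. Summing over $n$ then gives $\norm{S_\Psi-\sum 2^{-\ell_k}T_{\Psi(x_k)}}\le\sum_n\epsilon2^{-n-1}<\epsilon$. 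This holds because the trace-norm errors are absolutely summable and the sums of trace-class pieces converge to $S_\Psi$ weakly.

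To produce the approximation on one $X_n$, we use that $\Psi|_{X_n}$ is Bochner-measurable into the separable space $H$ and essentially bounded, while $\mu(X_n)<\infty$. It follows that $t\mapsto T_{\Psi(t)}$ is Bochner integrable into the trace class, with $\norm{T_f-T_g}_1\le(\norm f+\norm g)\norm{f-g}$. We approximate $\Psi$ on $X_n$ by a simple function, taking values in the essential range (so that (b) holds), on finitely many measurable pieces $E_j\subseteq X_n$. We choose sample points $x_j\in E_j$ with $\norm{\Psi(x_j)}^2\le D$ and $\Psi(x_j)$ close to $\Psi$ on most of $E_j$. Then we replace the weight $\mu(E_j)$ by a finite sum of dyadic numbers $2^{-\ell}$ not exceeding it: take the binary expansion truncated from below, repeating the point $x_j$ for each dyadic term. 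The total weight in $X_n$ then stays at most $\mu(X_n)$, which gives the inequality in (d), and the truncation error is as small as desired. Listing the points of $X_1,X_2,\dots$ consecutively defines $L_n$ and gives (c).

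The main technical point is the choice of the sample points. They must satisfy the pointwise bound (b) and be $L^1$-close to $\Psi$ on their cells, despite $\Psi$ being defined only almost everywhere. I would handle this by refining the cells via a countable dense set in $H$: set $E_{j}=\{t\in X_n:\norm{\Psi(t)-h_j}<\eta\}\setminus\bigcup_{i<j}E_i$, discard null cells and a tail of small total measure, and pick any $x_j\in E_j$ outside the null set where $\norm{\Psi}^2>D$. Here we use $\mu(X)=\infty$ only to ensure that infinitely many pieces exist, so that $\N$ indexes everything.
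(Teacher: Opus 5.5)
Your proof is correct and is essentially the paper's argument. You pull back a partition of $H$ into small cells (your $\eta$-balls around a dense sequence play the role of the sets $H_{n,j}$), discard a tail of small measure, truncate the binary expansions of the cell measures, pick a point with $\|\Psi\|^2\le D$ in each cell of positive measure, and add up the per-cell errors (you in trace norm, the paper in operator norm via quadratic forms), with the metric case handled by Lemma~\ref{partition_cardinality_lemma} at scale $r=R/4$ exactly as the paper does.

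The one real difference is that you repeat $x_j$ once per dyadic term. The paper instead uses non-atomicity to split each cell into disjoint pieces $X_{n,j,k}$ of measure exactly $2^{-\ell_{n,j,k}}$ and picks a separate point in each. This is harmless for the lemma as stated, but the paper's version makes the sampling sequence injective, which is what the later ``distinct elements'' claim in Theorem~\ref{frame_distinct_elements} silently relies on.
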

\begin{proof} Let $0<\epsilon<1.$ Let $\set{X_n}\inN$ be a partition of $X$ consisting of measurable subsets that satisfies statement $(a)$. We then further partition $X_n$ as follows.  For each $n\in \N$ we partition $H$ into countably many subsets $\set{H_{n,j}}_{j\in\N}$ such that the diameter of $H_{n,j}$ is less than $\Le D^{-1/2}6^{-1}2^{-n}\epsilon$ for all $j\in\N$. Then for each $n\in\N$ we pick $J(n)\in \N$ large enough that 
\begin{equation}\label{bi2}
\sum_{n\in\N}\sum_{j=J(n)+1}^{\infty}\mu\bigparen{X_n\cap \Psi^{-1}(H_{n,j})}<\frac{\epsilon}{3D}.
\end{equation}
 Next, we define 
$$X_{n,j}=X_n\cap\Psi^{-1}(H_{n,j}),\quad \text{for all }1\leq j\leq J(n),n\in\N.$$ For a fixed $n$ and a fixed $j$, there exists a sequence of natural numbers $\set{\ell_{n,j,k}}_{k\in\N}$ such that $\mu(X_{n,j})=\sum_{k=1}^\infty 2^{-\ell_{n,j,k}}.$ Since $\mu$ is non-atomic, for each $k\in \N$ there exists a subset $X_{n,j,k}$ of $X_{n,j}$ with $\mu(X_{n,j,k})=2^{-\ell_{n,j,k}}.$
Let $K(n,j)\in\N$ be large enough that 
\begin{equation}\label{bi3}
\sum_{k=K(n,j)+1}^\infty 2^{-\ell_{n,j,k}}<(3D)^{-1}2^{-j-n}\epsilon\qquad \text{for all }1\leq j\leq  J(n),\,n\in \N.
\end{equation}
Let 
\[
X'=\bigcup_{n\in\N}\bigcup_{j=J(n)+1}^\infty 
\bigparen{X_n\cap \Psi^{-1}(H_{n,j})}\quad\text{and}\quad
X''=\bigcup_{n\in\N}\bigcup_{j=1}^{J(n)}\bigcup_{k=K(n,j)+1}^\infty X_{n,j,k}.
\]
For notational convenience, for any measurable subset $A\subseteq X$ we define the operator $S_A$ by $$S_A(f)=\int_A\ip{f}{\Psi(t)}\Psi(t)\,d\mu(t)\quad \text{for all }f\in H.$$
By \eqref{bi2} we have
\[
\norm{S_{X'}}= \sup_{||f||=1} \int_{X'} |\ip{f}{\Psi(t)}|^2 \, d\mu(t) \le D\mu(X')=\frac{\epsilon}{3}.
\]
Likewise, by \eqref{bi3}
\[
\norm{S_{X''}}= \sup_{||f||=1} \int_{X''} |\ip{f}{\Psi(t)}|^2 \, d\mu(t) \le D\mu(X'')
\le D \sum_{n\in \N} \sum_{j=1}^{J(n)} (3D)^{-1}2^{-j-n}\epsilon
<\frac{\epsilon}{3}.
\]Now we pick one point $x_{n,j,k}$ from $X_{n,j,k}$ satisfying $\norm{\Psi(x_{n,j,k})}^2\leq D .$ Therefore, for each $n,j,k$ and $f\in H$ with $||f||=1$ we have 
\begin{align*}
    \begin{split}
       \ip{(S_{X_{n,j,k}}-2^{-\ell_{n,j,k}}T_{\Psi(x_{n,j,k})})f}{f}
       &= \int_{X_{n,j,k}}\bigip{\ip{f}{\Psi(t)}\Psi(t)-\ip{f}{\Psi(x_{n,j,k})}\Psi(x_{n,j,k})}{f}\,d\mu(t)
       \\
       &=\int_{X_{n,j,k}}(|\ip{f}{\Psi(t)}|^2-|\ip{f}{\Psi(x_{n,j,k})}|^2)\,d\mu(t)
    \end{split}
\end{align*}
By the triangle inequality, the Cauchy–Bunyakovsky–Schwarz inequality, and the bound on diameter of $H_{n,j}$ yields 
\[
\begin{aligned}
\bigg||\ip{f}{\Psi(t)}|^2-|\ip{f}{\Psi(x_{n,j,k})}|^2\bigg|& \le 
(|\ip{f}{\Psi(t)}|+|\ip{f}{\Psi(x_{n,j,k})}|)|\ip{f}{\Psi(t)-\Psi(x_{n,j,k})}|
\\
&\le 2D^{1/2} D^{-1/2}6^{-1}2^{-n}\epsilon\le 3^{-1} 2^{-n} \epsilon.
\end{aligned}
\]
Hence, 
$$ 
\bignorm{S_{X_{n,j,k}}-2^{-\ell_{n,j,k}}T_{\Psi(x_{n,j,k})}}
\le \sup_{||f||=1}|\ip{(S_{X_{n,j,k}}-2^{-\ell_{n,j,k}}T_{\Psi(x_{n,j,k})})f}{f}|
\Le 2^{-\ell_{n,j,k}+1}6^{-1}2^{-n}\epsilon.$$
It follows that 
\begin{align*}
\begin{split}
\bignorm{S_\Psi-\sum_{n\in \N}\sum_{j=1}^{J(n)}\sum_{k=1}^{K(n,j)}2^{-\ell_{n,j,k}}T_{\Psi(x_{n,j,k})}}&\Le \bignorm{S_{X\setminus (X'\cup X'')}-\sum_{n\in \N}\sum_{j=1}^{J(n)}\sum_{k=1}^{K(n,j)}2^{-\ell_{n,j,k}}T_{\Psi(x_{n,j,k})}}+\frac{2\epsilon}{3}\\
&\Le \sum_{n\in \N}\sum_{j=1}^{J(n)}\sum_{k=1}^{K(n,j)}\norm{S_{X_{n,j,k}}-2^{-\ell_{n,j,k}}T_{\Psi(x_{n,j,k})}} +\frac{2\epsilon}{3}\\
&< \frac{2\epsilon}{3}+ \sum_{n\in \N}\sum_{j=1}^{J(n)}\sum_{k=1}^{K(n,j)}2^{-\ell_{n,j,k}+1}6^{-1}2^{-n}\epsilon<\epsilon.
\end{split}
\end{align*}

Finally, let $L_1=0$ and $L_i=\sum_{n=1}^{i-1}\sum_{j=1}^{J(n)}\sum_{k=1}^{K(n,j)}1$ for $i\geq 2.$ Statement (c) then follows by letting $\set{x_k}_{k\in\N}$ be a sequence in $X$ such that for each $n\geq 1$ we have $$\set{x_k}_{k=L_n+1}^{L_{n+1}}= \set{x_{n,j,k}\,|\,1\leq j\leq J(i), 1\leq k\leq K(j,n)}.$$

If $X$ is a metric space equipped with metric $d$ and $(X,\mu)$ satisfies assumptions (II) and (III), then by Lemma \ref{partition_cardinality_lemma}, we can construct a partition $\set{X_n}\inN$ that satisfies statement (a), (e) and (f). Using the same argument, we see that this partition also satisfies statements (b)--(d).
\end{proof}

The sparsity of the sampled sequence obtained in Lemma \ref{sampling_lemma}
allows us to further extract a frame from the sampled sequence in Theorem \ref{frame_distinct_elements} by using an appropriate pairing, partitioning and binary selection procedure. We first establish a variant of a sampling lemma \cite[Lemma 7.3]{Bo24}. By imposing stronger assumptions on weights Lemma \ref{conditions_sampling_function_injective} guarantees that the sampling function takes at most one value in each block of a partition. 

\begin{lemma}
    \label{conditions_sampling_function_injective}
    Let $C>0$ be the absolute constant from Theorem \ref{binary_selector_theorem}.
    Let $\{I_1,\ldots, I_K\}$ be a partition of a finite set $I$  and $\delta>0$. 
    Suppose that:
    \begin{enumerate}[(i)]
        \item 
        $\set{T_i}_{i\in I}$ is a family of positive semi-definite trace-class operators on $H$ with $\text{tr}(T_i)\leq \delta$ for all $i\in I$. 
    \item 
    $\set{\ell_i}_{i\in I}$ is a sequence of natural numbers such that $$T\coloneq \sum_{i\in I}2^{-\ell_i}T_i\leq \textbf{I} \quad \text{and}\quad \sum_{i\in I_k}2^{-\ell_i}\Le \frac{\epsilon^2}{2 C^2\delta}\quad \text{for all }1\leq k\leq K.$$
    \item
    $\Kc$ is a closed subspace of $H$ such that $\gamma\coloneq \text{tr}(P_{\Kc}TP_{\Kc})\leq1$.
      \end{enumerate} 
    Then there exists some subset $I'$ of $I$ such that $$-\epsilon P_{\Kc^\perp}-4\sqrt{\gamma}\textbf{I}\leq \frac{1}{2^\beta}\sum_{n\in I'}T_{n} -T \leq \epsilon P_{\Kc^\perp}+4\sqrt{\gamma}\textbf{I}\quad \text{ and }\quad \#(I'\cap I_k)\leq1\quad \text{for all }1\leq k\leq K,$$
    where $\beta\in\N$ is such that $1<2^{\beta}\frac{\epsilon^2}{C^2\delta}\leq 2$.
\end{lemma}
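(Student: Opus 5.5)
We follow the scheme of \cite[Lemma 7.3]{Bo24}: convert the weighted family $\set{2^{-\ell_i}T_i}$ into an unweighted family of dyadic copies, apply the binary selector Theorem \ref{binary_selector_theorem} with a pairing adapted to the blocks $I_k$, and then handle the subspace $\Kc$ via Lemma \ref{elementary_Hilbert_result}.

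\emph{Reduction to unweighted copies.} Replace each $i\in I$ by $2^{\beta-\ell_i}$ copies of the operator $2^{-\beta}T_i$ when $\ell_i\le\beta$; when $\ell_i>\beta$ we keep a single operator $2^{-\ell_i}T_i$. Every resulting operator $S$ satisfies $\text{tr}(S)\le 2^{-\beta}\delta$, and their sum is still $T\le\textbf{I}$. Set $\tilde\delta=2^{-\beta}\delta$ and choose $N=\beta$, so that $2^N\tilde\delta=\delta$ and $C\sqrt{2^N\tilde\delta}$ is controlled. The intent is that a selector $I_b$, multiplied by $2^N$, reproduces $2^{-\beta}\sum_{n\in I'}T_n$, with at most one chosen index per block.

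\emph{Injectivity per block.} We arrange the intermediate partitions into pairs so that copies belonging to the same block $I_k$ are paired together at every level. Then each binary split halves the mass of every block, up to rounding of at most one element. Since $\sum_{i\in I_k}2^{-\ell_i}\le \epsilon^2/(2C^2\delta)<2^{-\beta}$ by the choice of $\beta$, the total number of dyadic units $2^{-\beta}$ in each block is less than $1$. After $\beta$ halvings, each selector therefore contains at most one element from each block $I_k$. We then take $I'$ to be the set of original indices represented in a chosen $I_b$. Indices with $\ell_i>\beta$, which carry weight $2^{-\ell_i}T_i$ rather than $2^{-\beta}T_i$, need separate treatment. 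We first merge them dyadically within each block, pairing equal weights and promoting them upward, as in \cite{Bo24}. Any leftover pieces are either selected or dropped, and we must account for the error this causes.

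\emph{Error estimate, the main obstacle.} We cannot simply quote $\bignorm{2^{-\beta}\sum_{I'}T_n-T}\le C\sqrt{\delta}$, because we need $\epsilon P_{\Kc^\perp}+4\sqrt\gamma\,\textbf{I}$, which is independent of $\delta$ on $\Kc$. The plan is to apply Theorem \ref{binary_selector_theorem} to the compressed operators $P_{\Kc^\perp}SP_{\Kc^\perp}$, whose sum is at most $\textbf{I}$ and whose traces are at most $\tilde\delta$. Because the blocks have small mass, we may take an effective $\delta$ of order $\epsilon^2/C^2$, which gives an error of at most $\epsilon$ on $\Kc^\perp$. On $\Kc$, the $\Kc$-block of both $T$ and $2^{-\beta}\sum_{I'}T_n$ is bounded by its trace. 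One of the $2^N$ selectors has $\Kc$-trace at most the average $\gamma$, so we choose that selector. Lemma \ref{elementary_Hilbert_result} then bounds the off-diagonal terms of $T$ and of the selected sum by $\sqrt{\gamma}$ and $\sqrt{\gamma(1+\epsilon)}$ respectively. Summing these with the diagonal $\Kc$ terms, each at most $\gamma\le\sqrt\gamma$, gives a total of at most $4\sqrt\gamma$.

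The step requiring the most care is the mismatch between $\delta$ and the $\epsilon$ appearing in the conclusion. We handle it by taking $N=\beta$ and using $2^\beta\epsilon^2/(C^2\delta)\in(1,2]$, which turns $C\sqrt{2^N\tilde\delta}$ into roughly $\epsilon$ after the rescaling. The exact constant must be checked against $B_j$ in \cite[Lemma 5.2]{Bo24}.
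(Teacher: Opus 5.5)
Your handling of the $\Kc$-part matches the paper: choose, among the $2^N$ final selectors, one whose $\Kc$-trace is at most the average $\gamma$, then control the off-diagonal terms with Lemma \ref{elementary_Hilbert_result}. The gap is in the quantitative core. Your choice of granularity and number of levels is wrong, and this breaks three things at once: the normalization, the error bound, and the one-per-block property.

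Note first that (ii), combined with $2^\beta\epsilon^2/(C^2\delta)\le 2$, forces $2^{-\ell_i}\le \epsilon^2/(2C^2\delta)\le 2^{-\beta}$, so $\ell_i\ge\beta$ for every $i$. Your main case $\ell_i\le\beta$ is therefore essentially vacuous, and the case you defer to ``separate treatment'' is the generic one.
\begin{enumerate}
\item[(a)] \emph{Normalization.} With $N=\beta$, a selected operator $2^{-\ell_i}T_i$ contributes $2^{N}2^{-\ell_i}T_i=2^{\beta-\ell_i}T_i$, not $2^{-\beta}T_i$. So $2^N\sum_{I_b}$ is not of the form $2^{-\beta}\sum_{n\in I'}T_n$.
\item[(b)] \emph{One per block.} The block count does not drop to one. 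A block containing many indices with large $\ell_i$ retains about $\#I_k/2^\beta$ of them after $\beta$ halvings.
\item[(c)] \emph{Error.} The selector error is $C\sqrt{2^N\tilde\delta}=C\sqrt{\delta}$, which, as you observe yourself, is much larger than $\epsilon$.
\end{enumerate}
Your proposed remedy does not repair this. The error in Theorem \ref{binary_selector_theorem} depends only on $2^N$ times the maximal trace of the individual operators. Compressing by $P_{\Kc^\perp}$ does not lower that quantity, and the small mass of the blocks plays no role in it. So there is no ``effective $\delta$ of order $\epsilon^2/C^2$''. Likewise, ``merging dyadically and promoting upward'' two distinct operators into one of doubled weight is itself a selection step, and you never control its error.

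The missing idea is to make the granularity finer than all the weights.
\begin{enumerate}
\item[(1)] Take $r\ge\max_i\{\ell_i,\beta\}+1$ and split $2^{-\ell_i}T_i$ into $2^{r-\ell_i}$ copies of $2^{-r}T_i$, each of trace at most $2^{-r}\delta$.
\item[(2)] Pad with zero operators so that all cardinalities stay even.
\item[(3)] Run $N=r-\beta$ levels of binary selection, pairing copies from the same block $I_k$ whenever possible.
\end{enumerate}
This fixes all three problems:
\begin{enumerate}
\item[(a)] $2^N\cdot 2^{-r}T_i=2^{-\beta}T_i$, which is exactly the required normalization.
\item[(b)] Block $I_k$ contains $2^r\sum_{i\in I_k}2^{-\ell_i}\le 2^{r-\beta}=2^N$ copies. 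Each block-respecting halving leaves at most $\lceil m/2\rceil$ of $m$ copies, so each final selector contains at most one copy per block, giving $\#(I'\cap I_k)\le 1$.
\item[(c)] Theorem \ref{binary_selector_theorem} gives $\bignorm{2^{-\beta}\sum_{n\in I'}T_n-T}\le C\sqrt{2^{N}2^{-r}\delta}=C\sqrt{2^{-\beta}\delta}<\epsilon$ on all of $H$, using $2^\beta\epsilon^2/(C^2\delta)>1$. No compression is needed; the term $\epsilon P_{\Kc^\perp}$ simply comes from compressing this global bound.
\end{enumerate}
Since this bound holds for every selector, you may then pick the one with smallest $\Kc$-trace, and your $\Kc$-argument goes through unchanged.
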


\begin{proof}
 Let $\beta\in \N$ be such that $1<2^{\beta}\frac{\epsilon^2}{C^2\delta}\leq 2$ and let $r\in \N$ be such that $r\geq \max_{i\in I}\set{\ell_i,\beta}+1$.  For each $i\in I$ we decompose $2^{-\ell_i}T_i$ into $$2^{-\ell_i}T_i= \underbrace{2^{-r}T_i +\cdots+2^{-r}T_i}_{2^{r-\ell_i} \text{ terms}}=\sum_{n=1}^{2^{r-\ell_i}}T_{i,n},
 $$
where $T_{i,n}=2^{-r}T_i$ for $1\leq n\leq 2^{r-\ell_i}$. 
 We define the index set $J_{1}$ as 
$$J_{1}=\set{(i,n)\,|\,1\leq n\leq 2^{r-\ell_i},i\in I},$$
Let $J_2$ be a finite index set such that $\sum_{i\in I} 2^{-\ell_i}+2^{-r} \#J_2 \in \N$. Define ``phantom" operators $\mathcal N_j=0$ for all $j\in J_2$. Let $N=r-\beta$.
 
In the first step we partition $J_{1}$ into sets of size $2$ by grouping $(i,n)$ that share the same index $i$. We also partition $J_2$ into sets of size $2$ in any way.
Let $B_{b}$, $b\in \set{0,1}$, be an arbitrary binary selector of order $1$ corresponding to the partition of $J_1 \cup J_2$. 

We define a sequence of intermediate partitions by sets of size $2$ and a binary selector $B_b$, $b\in \{0,1\}^N$ by a recursive procedure. Suppose that we are given a binary selector
$B_b$, $b\in \{0,1\}^p$ of order $1\le p <N$. Note that each $B_b$ has an even number of elements since
$$2^{-N}\bigparen{\#J_{1}+\#J_{2}}=2^{r-N} \bigg(\sum_{i\in I}2^{-\ell_i}+2^{-r} \#J_2 \bigg)\in \N.$$
Then we choose a partition of each $B_{b}$ by matching elements $(i_1,n_1)\in J_1$ and $(i_2,n_2)\in J_1$ such that $i_1$ and $i_2$ belong to the same set $I_k$ for some $1\leq k\leq K$.
If no more such matching is possible, then we pair the remaining elements in $J_1 \cup J_2$ in an arbitrary way. 
Once we constructed a partition of $B_b$ into sets of size $2$, we let $B_{b0}$ and $B_{b1}$ be any selectors of $B_b$. This recursive procedure yields a binary selector $B_b$, $b\in \{0,1\}^N$ of order $N$. 

Define $B^{(1)}_{b}=B_{b}\cap J_{1}$. Since 
\begin{equation}
\label{cardinality_estimate}
\sum_{i\in I_k}2^{r-\ell_i-N}\Eq 2^\beta\sum_{i\in I_k}2^{-\ell_i}\leq \frac{\epsilon^2}{2\delta C^2}\frac{2\delta C^2}{\epsilon^2}= 1,
\end{equation}
for each $1\le k \le K$, a binary selector $B_b$ of order $N$ contains at most one pair of indices $(i,n)$ originating from $J_{1}$ with $i\in I_k$.  
That is, for each $1\leq k\leq K$ and $b\in \{0,1\}^N$, the set $\set{T_i\,|\, (i,n) \in B^{(1)}_{b}}$ contains at most one $T_i$ with $i\in I_k$.

Now by Theorem \ref{binary_selector_theorem} applied to the operator family $\{T_{i,n}\}_{(i,n) \in J_1}  \cup \{\mathcal N_j\}_{j \in J_2}$ satisfying $\operatorname{tr}(T_{i,n}) \le 2^{-r}\delta$, there exists a binary selector $B_{b}$, $b\in \{0,1\}^N$ of order $N$ such that 
\begin{equation}
\label{binary_selector_estimateII}
\bigg\| 2^N \sum_{(i,n)\in B_{b}^{(1)}}T_{i,n}- T \bigg\|
=
\bigg\|\frac{1}{2^{\beta}}\sum_{(i,n)\in B_{b}^{(1)}}T_{i}-T\bigg\| \Le  C\sqrt{2^{-\beta}\delta}<\epsilon.
\end{equation}
Note that we ignored phantom operators $\mathcal N_j=0$ for all $j\in J_2$.
Since the family $\{B_b^{(1)}\,|\, b\in \{0,1\}^N\}$ forms a partition of $J_1$, we have
\[
\sum_{b\in \{0,1\}^N} \sum_{(i,n)\in B_{b}^{(1)}}2^{-r}T_{i}
=\sum_{b\in \{0,1\}^N} \sum_{(i,n)\in B_{b}^{(1)}}T_{i,n}
= \sum_{(i,n) \in J_1} T_{i,n} = \sum_{i\in I} 2^{-\ell_i} T_i=T.
\]
Thus,
\[
\sum_{b\in \{0,1\}^N} \operatorname{tr}\bigg(P_{\mathcal K} \bigg( \sum_{(i,n)\in B_{b}^{(1)}}2^{-r}T_{i}\bigg) P_{\mathcal K} \bigg)
= \operatorname{tr}(P_{\mathcal K}TP_{\mathcal K}).
\]
Hence, there exists $b\in \{0,1\}^N$ such that
\begin{align}
\begin{split}
  \label{trace_dominate_operator_norm_ineq}  
\text{tr}\Bigparen{P_{\Kc}\Bigparen{\frac{1}{2^{\beta}}\sum_{(i,n)\in B_{b}^{(1)}}T_i}P_{\Kc}}
&=2^N \text{tr}\Bigparen{P_{\Kc}\Bigparen{\sum_{(i,n)\in B_{b}^{(1)}}2^{-r}T_{i}}P_{\Kc}}
\\
&\leq \text{tr}(P_{\Kc} T P_{\Kc})\leq \gamma.
\end{split}
\end{align}
Let $
 I'=\{i \in I\,|\, \exists n \text{ such that }(i,n) \in B^{(1)}_b\} .$
 By the argument following \eqref{cardinality_estimate} we have
 \[
 \#(I' \cap I_k) \le 1 \qquad\text{for all }1 \le k \le K.
 \]
 For notational convenience, we let 
 \[
 S\coloneq \frac{1}{2^\beta}\sum_{(i,n)\in B_{b}^{(1)}}T_i= \frac{1}{2^\beta} \sum_{i\in I'} T_i.
 \]
By \eqref{binary_selector_estimateII} and the assumption that $||T||\le 1$ we have
\begin{align}\label{l2} 
\begin{split}
\bignorm{P_{\Kc^\perp}S P_{\Kc^\perp}}\Le \bignorm{S}<1+\epsilon<2.
\end{split}
\end{align}
By Lemma \ref{elementary_Hilbert_result}, \eqref{trace_dominate_operator_norm_ineq}, and \eqref{l2} we have
\begin{equation}
\label{estimate_1}
-\sqrt{2\gamma}\textbf{I}\Le S-P_{\Kc}S P_{\Kc}-P_{\Kc^\perp}S P_{\Kc^\perp}\Le \sqrt{2\gamma}\textbf{I}.
\end{equation}
Applying the same logic to $T$, we see that 
\begin{equation}
\label{estimate_2}
-\sqrt{\gamma}\textbf{I}\Le T-P_{\Kc}T P_{\Kc}-P_{\Kc^\perp}T P_{\Kc^\perp}\Le \sqrt{\gamma}\textbf{I}.
\end{equation}
Moreover, by equations (\ref{binary_selector_estimateII}) and (\ref{trace_dominate_operator_norm_ineq}), we obtain 
\begin{equation}
\label{estimate_3}
    -\epsilon P_{\Kc^\perp} \Le  P_{\Kc^\perp}\bigparen{S-T}P_{\Kc^\perp} \Le  \epsilon P_{\Kc^\perp},
\end{equation}
and 
\begin{equation}
\label{estimate_4}
    - \gamma P_{\Kc} \Le  P_{\Kc}\bigparen{S-T}P_{\Kc} \Le \gamma P_{\Kc}.
\end{equation}
Combining \eqref{estimate_1}--\eqref{estimate_4} yields  
$$ -(\sqrt{2}+1)\sqrt{\gamma}\textbf{I}-\gamma \textbf{I}-\epsilon P_{\Kc^\perp}\Le S-T\Le (\sqrt{2}+1)\sqrt{\gamma}\textbf{I}+\gamma \textbf{I}+\epsilon P_{\Kc^\perp}. $$
Note that $\gamma\leq 1$. It then follows that
\begin{equation}
\label{estimate_5}
    -\epsilon P_{\Kc^\perp}-4\sqrt{\gamma}\textbf{I}\Le S-T \Le \epsilon P_{\Kc^\perp}+4\sqrt{\gamma}\textbf{I}
\end{equation}
That is, 
$$-\epsilon P_{\Kc^\perp}-4\sqrt{\gamma}\textbf{I}\leq \frac{1}{2^\beta}\sum_{n\in I'}T_{n} -T \leq P_{\Kc^\perp}+4\sqrt{\gamma}\textbf{I},$$ and  $\#(I'\cap I_k)\leq 1$ for all $1\leq k\leq K$.
\end{proof}

Next we show the existence of a sampling set, which is a finite union of uniformly discrete sets, by revamping the proof of discretization result for continuous frames \cite[Theorem 7.1]{Bo24}.

\begin{theorem}
\label{frame_distinct_elements}
  Let $\Psi\colon X\rightarrow H$ be a continuous frame with an upper frame bound $B$. Assume that $\norm{\Psi(t)}^2\Le D$ $\mu$-almost everywhere and $\mu(X)=\infty$. Then for any $0<\epsilon<1$ that is small enough there exists some sequence of distinct elements $\set{x_n}\inN\subseteq X$ such that the following statements hold:
    \begin{enumerate}\setlength\itemsep{0.5em}
  \item [\textup{(a)}]  $\norm{\Psi(x_n)}^2\Le D$ for all $n\in \N,$
   \item [\textup{(b)}] There exists some positive integer $\beta$ with $2^{-\beta}<\frac{\epsilon^2}{16BC^2D}\leq 2^{-\beta+1}$ such that 
   \begin{equation}\label{fre}
   \Bignorm{S_\Psi-2^{-\beta}\sum_{n\in\N}T_{\Psi(x_n)}}<\epsilon.
   \end{equation}
  
\end{enumerate}
Furthermore, if $(X,d,\mu)$ is a metric measure space satisfying assumptions (II) and (III), then the following statement also holds:
    \begin{enumerate}\setlength\itemsep{0.5em}
    \item [\textup{(c)}] There exists some constant $R\coloneq\min\bigparen{\frac{1}{2}\bigparen{\frac{\epsilon^2}{32\alpha B C^2D}}^{\frac{1}{\gamma}}, R_A}$ such that  
    $$\#\bigparen{B_{\frac R2}(x_n)\cap \{x_k\,|\, k\in \N\}}\leq 2(C_{R_A})^5\quad\text{for all }n\in\N.$$ 
    \end{enumerate}
\end{theorem}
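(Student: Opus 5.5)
We will prove Theorem~\ref{frame_distinct_elements} by combining Lemma~\ref{sampling_lemma} with Lemma~\ref{conditions_sampling_function_injective}. Normalize first: replace $\Psi$ by $\Psi/\sqrt{B}$, so that $S_\Psi\le\textbf{I}$ and $\norm{\Psi(t)}^2\le D/B$. Set $\delta=D/B$; this works because $\operatorname{tr}(T_{\Psi(x)})=\norm{\Psi(x)}^2$. Choose $\tilde\epsilon$ of order $\epsilon^2/(BC^2D)$, namely $\tilde\epsilon=\frac{\epsilon^2}{32BC^2D}$ (up to the normalization). This matches the radius $R$ in (c), since $R=\min(\frac12(\tilde\epsilon/\alpha)^{1/\gamma},R_A)$.

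Apply Lemma~\ref{sampling_lemma} with this $\tilde\epsilon$ and a small error $\epsilon/4$. This gives a partition $\{X_n\}$, points $x_k$, blocks $\{L_n+1,\dots,L_{n+1}\}$ and exponents $\ell_k$ with three properties. The block weights satisfy $\sum_{\text{block }n}2^{-\ell_k}\le\mu(X_n)\le\tilde\epsilon$. The approximation satisfies $\norm{S_\Psi-\sum_k2^{-\ell_k}T_{\Psi(x_k)}}<\epsilon/4$. In the metric case we also get (e) and (f).

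The block-weight bound is precisely hypothesis (ii) of Lemma~\ref{conditions_sampling_function_injective}, with the blocks as the $I_k$. Hence any selection takes at most one point from each $X_n$. The $\beta$ given there satisfies $1<2^\beta\epsilon^2/(C^2\delta)\le 2$, which is the stated range once the constants are adjusted.

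Lemma~\ref{conditions_sampling_function_injective} applies only to finite index sets, while the sum is infinite. We therefore follow the scheme of \cite[Theorem 7.1]{Bo24} and handle the infinite sum by exhaustion.

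\textbf{Exhaustion step.} Fix an orthonormal basis $\{e_j\}$ and the increasing finite-dimensional subspaces $\Kc_m=\operatorname{span}\{e_1,\dots,e_m\}$. Split the blocks into consecutive finite groups $G_1,G_2,\dots$ as follows. The operator $T^{(m)}=\sum_{k\in G_m}2^{-\ell_k}T_{\Psi(x_k)}$ should have $\gamma_m=\operatorname{tr}(P_{\Kc_{m-1}}T^{(m)}P_{\Kc_{m-1}})$ small, say $16\gamma_m\le 4^{-m}\epsilon^2$. This is possible because the tail of the convergent series $\sum_k 2^{-\ell_k}\operatorname{tr}(P_{\Kc}T_{\Psi(x_k)}P_{\Kc})$ is small. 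Each group should also be large enough that $P_{\Kc_m^\perp}$ is essentially only seen by later groups, so that the errors $\epsilon P_{\Kc_{m-1}^\perp}$ do not accumulate.

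Apply Lemma~\ref{conditions_sampling_function_injective} to group $G_m$ with $\Kc=\Kc_{m-1}$ to get $I'_m$. Then sum the estimates over $m$. The terms $4\sqrt{\gamma_m}$ sum to at most $\epsilon/2$. For the terms $\epsilon P_{\Kc_{m-1}^\perp}$, the key point is that the local error of group $m$ is supported on directions orthogonal to $\Kc_{m-1}$, while the earlier groups have essentially no mass there.

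Turning this into an honest operator inequality $-\epsilon\textbf{I}\le\sum_m(\cdot)\le\epsilon\textbf{I}$ is the step I expect to be the main obstacle. I would bound it as in \cite{Bo24}, using nested subspaces and choosing each group only after $\Kc_{m-1}$ has captured most of the mass of the previous groups.

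Take $\{x_n\}$ to be the points indexed by $\bigcup_m I'_m$. They lie in distinct $X_n$, so they are distinct, and (a) follows from Lemma~\ref{sampling_lemma}(b). Undoing the normalization by $B$ gives \eqref{fre} with the factor $2^{-\beta}$.

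\textbf{Part (c).} The selected set has at most one point per $X_n$, so in principle Lemma~\ref{sampling_lemma}(f) bounds its count in $B_{R/2}(x_n)$ by $(C_{R_A})^5$. The factor $2$ in (c) suggests that the construction actually picks up to two points per cell. This happens, for instance, if each block is split into two subsets before applying Lemma~\ref{conditions_sampling_function_injective} with the weight bound $\epsilon^2/(2C^2\delta)$ versus $\mu(X_n)\le 2\tilde\epsilon$. The resulting set is then a union of two sequences, each with one point per cell. Applying Lemma~\ref{sampling_lemma}(f) to each sequence gives the bound $2(C_{R_A})^5$.
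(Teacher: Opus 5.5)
There is a genuine gap in the exhaustion step, which you yourself flag, and it is the heart of the proof. Your reduction to Lemma~\ref{conditions_sampling_function_injective} matches the paper: the choice of $\tilde\epsilon$, $\delta=D/B$, the matching of $\beta$, and the blocks as the $I_k$. The trouble starts when you apply the lemma to $G_m$ with $\Kc=\Kc_{m-1}$. This gives the error $\epsilon P_{\Kc_{m-1}^\perp}+4\sqrt{\gamma_m}\textbf{I}$, and these bounds can only be added. Since $\Kc_{m-1}^\perp=\clspan\{e_m,e_{m+1},\dots\}$ are decreasing infinite-dimensional tails, you get $\sum_m P_{\Kc_{m-1}^\perp}e_j=j\,e_j$, so the accumulated error is unbounded. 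Smallness of $\gamma_m$ controls $T^{(m)}$ only inside $\Kc_{m-1}$. It says nothing about how far out in $\Kc_{m-1}^\perp$ the operator spreads. An operator-order error bound also cannot be relocalized afterwards by noting that other groups have little mass somewhere. Shrinking $\epsilon$ from group to group does not help either: $\beta$ in the lemma is tied to $\epsilon$, so you would lose the single factor $2^{-\beta}$ in \eqref{fre}.

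The missing idea is to feed the lemma a subspace whose complement is a finite-dimensional band, with the bands having bounded overlap. The paper builds orthogonal finite-dimensional spaces from the ranges of the rank-one operators already used: $H_{k+1}=\clspan\{P_{(H_1\oplus\cdots\oplus H_k)^\perp}T_n(H)\,|\,n\le K_k\}$. It then chooses $K_{k+1}$ so that $\sum_{n>K_{k+1}}2^{-\ell_n}T_n$ has trace at most $\eta_{k+1}$ on $H_1\oplus\cdots\oplus H_{k+1}$. The group $\{K_k+1,\dots,K_{k+1}\}$ then has range inside $H_1\oplus\cdots\oplus H_{k+2}$ and trace at most $\eta_k$ on $H_1\oplus\cdots\oplus H_k$. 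So the lemma applies with $\Kc=M_k=(H_{k+1}\oplus H_{k+2})^\perp$ and $\gamma_k\le\eta_k$. Since $\sum_kP_{M_k^\perp}\le2\textbf{I}$, the errors sum to $O(\epsilon)$. With your fixed basis you would need the analogous two-sided cut $\Kc=\Kc_{q_m}\oplus\Kc_{p_m}^\perp$, with $p_m$ chosen after group $m$ is fixed and $q_{m+2}=p_m$. This interlaced order of choices is exactly what your sketch lacks. Your handling of (c) is fine: if groups consist of whole blocks, you get at most one point per cell and even the bound $(C_{R_A})^5$. The paper's factor $2$ arises only because it cuts groups at arbitrary indices and uses \eqref{2es} to ensure that each cell meets at most two groups.
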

\begin{proof} Let $\Psi'=\frac{\Psi}{\sqrt{B}}.$ 
By choosing $\epsilon$ sufficiently small, we may assume that $\frac{\epsilon^2}{16BC^2D}<1$.
By Lemma \ref{sampling_lemma}, there exists a partition $\set{X_n}\inN$ of $X$, a sequence $\set{x_n}\inN\subseteq X$ that satisfies the following properties:   
 \begin{enumerate}\setlength\itemsep{0.5em}
    \item [\textup{(i)}] $\mu(X_n)\leq \frac{\epsilon^2}{32BC^2D}$ for all $n\in\N$.
    \item [\textup{(ii)}] $\norm{\Psi'(x_n)}^2\Le \frac{D}{B}$ for all $n\in \N$.
    \item [\textup{(iii)}] There exists an increasing sequence of positive integers $(L_n)\inN$ with $L_1=1$ such that for each $n\in \N$ we have $\set{x_k}_{k=L_n}^{L_{n+1}-1}\subseteq X_n$. 
    \item [\textup{(iv)}] There exists a sequence of natural numbers $(\ell_n)\inN$ with $\sum_{k=L_n}^{L_{n+1}-1}2^{-\ell_k}\leq \mu(X_n)$ for all $n\in\N$ such that 
    \begin{equation}\label{spsi}
    \Bignorm{S_{\Psi'}-\sum_{n\in\N}2^{-\ell_n}T_{\Psi'(x_n)}}<\frac{\epsilon}{4B}.
    \end{equation}
\end{enumerate}
For notational convenience, we denote $T_{\Psi'(x_n)}$ by $T_n$ for all $n\in \N.$ Note that $\text{tr}(T_n)\leq \frac{D}{B}$ for all $n\in \N.$ Then we define $T\colon H\rightarrow H$ by 
    \begin{equation}
    \label{definition_of_T}
    T:=\sumli 2^{-\ell_n}T_n\Le \textbf{I}.
    \end{equation}
Let $K_0=0$, $K_1=1$ and let $H_1=\set{0}$. We then construct a sequence of orthogonal finite dimensional subspaces $\set{H_k}_{k\in \N}$ of $H$ inductively as follows. Assume that $H_k$ and $K_k$ are already defined for some $k\ge 1$. We define $H_{k+1}$ by 
    $$H_{k+1}=\clspan\set{P_{(H_1\oplus H_2\oplus \cdots \oplus H_k)^\perp}T_n(H)\,|\,1\leq n\leq K_k},$$
    and choose $K_{k+1}\in \N$ large enough such that 
    \begin{equation}\label{2es}
    \{x_n\,|\, K_{k}+1 \le n \le K_{k+1}\} \not\subseteq X_j \qquad\text{for any }j\in \N
    \end{equation}
    and 
    \begin{equation}
    \label{choice_of_Kn}
    \text{tr}\bigg(P_{H_1\oplus H_2\oplus \cdots \oplus H_{k                  
    +1}}\bigg(\sum_{n=K_{k+1}+1}^\infty2^{-\ell_n}T_{n}\bigg)P_{H_1\oplus H_2 \oplus \cdots \oplus H_{k+1}}\bigg)\leq \eta_{k+1},
    \end{equation}
    where $\eta_0=0$ and $\eta_k=\min(B^{-2}4^{-{(k+2)}}\epsilon^2,1)$ for $k\geq 1$. 
   Indeed, the bound \eqref{choice_of_Kn} holds for sufficiently large $K_{k+1}$ since the spaces $H_k$ are finite dimensional and the series in \eqref{definition_of_T} converges in the strong operator topology. Hence, the compression of the same series to a finite dimensional subspace converges in the operator norm.
    Let $M_k=(H_{k+1}\oplus H_{k+2})^\perp$. 
    It follows from the construction of $H_k$ and the choice of $K_k$ that the following statements hold:
    \begin{enumerate}
    \setlength\itemsep{0.5em}
        \item [\textup{(A)}] For all $1\leq n\leq K_{k+1}$. Then we have $T_n(H)\subseteq H_1\oplus H_2 \oplus \cdots \oplus H_{k+2}$.
        \item [\textup{(B)}] By statement (A) and equation (\ref{choice_of_Kn}), we see that for each $k\geq 0$, 
\begin{align}
    \begin{split}
        \text{tr}\bigg(P_{M_k}\bigg(\sum_{n=K_k+1}^{K_{k+1}}2^{-\ell_n}T_n\bigg)P_{M_k}\bigg) &\Eq \text{tr}\bigg(P_{H_1\oplus\cdots \oplus H_k}\bigg(\sum_{n=K_k+1}^{K_{k+1}}2^{-\ell_n}T_n\bigg)P_{H_1\oplus\cdots \oplus H_k}\bigg)\\ &\Le \eta_{k}.
    \end{split}
\end{align}
(In particular, $P_{M_k}(\sum_{n=K_k+1}^{K_{k+1}}2^{-\ell_n}T_n)P_{M_k}=0$ when $k=0,1$.)
 \item [\textup{(C)}] Since  $\bigset{2^{-\frac{\ell_n}{2}}\Psi(x_n)}_{n\in \N}$ is a frame, and hence a complete set in $H$,  the space $\{H_k\}_{k\in \N}$ form an orthogonal decomposition of $H$, and we have 
 \begin{equation}
     \sum_{k=0}^\infty P_{H_{k+1}\oplus H_{k+2}}=2\textbf{I}.
     \end{equation}
    \end{enumerate}
    
Next, for each $k\geq 0$ we define $$I_k=\set{n\in\N\,|\,K_k+1\leq n\leq K_{k+1}}.$$
Let $i_{k,1}$ and $i_{k,2}$ be positive integers such that $x_{K_{k}+1}\in X_{i_{k,1}}$ and $x_{K_{k+1}}\in X_{i_{k,2}}$, respectively. We then accordingly partition $I_k$ into subsets 
$$\{ I_k\cap X_j\,|\, i_{k,1} \le j \le i_{k,2}\}.$$ 
Let $\beta \in\N$ be such that $2^{-\beta}<\frac{\epsilon^2}{16BC^2D}\leq 2^{-\beta+1}$.
Applying Lemma \ref{conditions_sampling_function_injective} to the above partition of $I_k$ and a subspace $M_k$, $k\geq 0$, yields a subset $I_k' \subset I_k$ such that  
$$-\frac{\epsilon}{4B} P_{M_k^\perp}-4\sqrt{\gamma_k}\textbf{I}\leq \frac{1}{2^\beta}\sum_{n\in I_k'}T_{n} -\sum_{n=K_k+1}^{K_{k+1}}2^{-\ell_n}T_n \leq \frac{\epsilon}{4B} P_{M_k^\perp}+4\sqrt{\gamma_k}$$
and $\#\bigparen{\set{x_n\,|\,n\in I_k'}\cap X_j}\leq 1$ for all $i_{k,1}\leq j\leq i_{k,2}$ and all $k\geq 0$.
Let $I'=\bigcup_{k=0}^\infty I'_k.$ By property (C), it follows that
$$\Bignorm{ \frac{1}{2^{\beta}}\sum_{n \in I'}T_{n}-T}< \frac{3\epsilon}{4B}.$$
By \eqref{2es} we have
\begin{equation}
\label{maximum_elements_in_Xj}
\#\bigparen{\set{x_n\,|\,n\in I'}\cap X_j}\leq 2 \quad \text{for all }j\in \N.
\end{equation} 
Hence, by the triangle inequality and \eqref{spsi} we obtain 
\begin{equation}
\label{final_operator_norm_estimate}
\Bignorm{\frac{1}{B}\bigparen{S_\Psi-2^{-\beta}\sum_{n\in I'}T_{\Psi(x_n)}}}\Eq \Bignorm{ S_{\Psi'}-\frac{1}{2^{\beta}}\sum_{n\in I'}T_{n}}< \frac{\epsilon}{B}.
\end{equation}
This proves statements (a) and (b).

In addition, if $(X,d,\mu)$ is a metric measure space satisfying assumptions (II) and (III), then a partition $\set{X_n}\inN$ of $X$ from Lemma \ref{sampling_lemma} satisfies the following property for the constant $R\coloneq\min\bigparen{\frac{1}{2}\bigparen{\frac{\epsilon^2}{32\alpha B C^2D}}^{\frac{1}{\gamma}},R_A}$. If $\set{z_n}\inN\subseteq X$ is a sequence with $z_n\in X_n$ for all $n\in \N$, then $$\#\bigparen{B_{\frac R2}(z_n)\cap\{z_k\,|\, k\in\N\}}\leq (C_{R_A})^5\quad \text{for all }n\in \N.$$  
 Statement (c) then follows by (\ref{maximum_elements_in_Xj}). 
\end{proof}

Finally, we use another binary selection procedure to further increase the sparsity of the sampled frame in Theorem \ref{frame_distinct_elements} to obtain the desired uniformly discrete frame.

\begin{theorem}
\label{frame_uniformly_discrete_elements}
Let $\Psi\colon X\rightarrow H$ be a continuous frame with an upper frame bound $B$, where $X$ is a metric measure space satisfying conditions (I)--(III). Assume that $\norm{\Psi(t)}^2\Le D$ $\mu$-almost everywhere. There exists a constant $C_1>0$ depending only on the doubling constant $X$ such that the following holds. For any sufficiently small $\epsilon>0$, there exists some sequence $\set{x_n}_{n\in I}\subseteq X$ for some $I\subseteq\N$ satisfying the following two conditions:
    \begin{enumerate}\setlength\itemsep{0.5em}
    \item [\textup{(a)}]  There exists $\beta\in \N$ with $2^{-\beta}<\frac{\epsilon^2}{C_1BD}\leq 2^{-\beta+1}$ such that 
    \begin{equation}
    \label{main_result_operator_norm_estimate}
    \Bignorm{S_\Psi-2^{2L-\beta}\sum_{n\in\N}T_{\Psi(x_n)}}<\epsilon,
    \end{equation}
    for some positive integer $1\le L \le 2(C_{R_A})^5$, 
    \item [\textup{(b)}] 
    There exists some constant $r\coloneq\min\bigparen{\frac{1}{8}\bigparen{\frac{\epsilon^2}{2\alpha C_1 B D}}^{\frac{1}{\gamma}},\frac{R_A}{4}}$ such that $\inf\limits_{n\neq m} d(x_n,x_m)\geq r.$
    \end{enumerate}
\end{theorem}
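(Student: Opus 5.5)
We start from Theorem \ref{frame_distinct_elements} applied with a smaller tolerance, say $\epsilon/2$, which yields distinct points $\set{x_n}\inN$ with $\norm{\Psi(x_n)}^2\le D$, an integer $\beta_0$ with $\norm{S_\Psi-2^{-\beta_0}\sum_n T_{\Psi(x_n)}}<\epsilon/2$, and the sparsity bound
\[
\#\bigparen{B_{R/2}(x_n)\cap\{x_k\,|\,k\in\N\}}\le 2(C_{R_A})^5 .
\]
This sequence is only a finite union of uniformly discrete sets. The goal is to extract a subfamily in which points at distance less than $r$ never both survive, and to compensate by a dyadic rescaling factor $2^{2L}$.

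\textbf{Step 1 (graph colouring).} Form the graph on $\N$ with $n\sim m$ when $d(x_n,x_m)<r$, where $r\le R/2$ (this is what the choice $r\approx \tfrac14 R$ in (b) arranges). By the sparsity bound, every vertex has degree at most $M-1$, where $M:=2(C_{R_A})^5$. A greedy colouring (valid for countable graphs by induction) splits $\N$ into at most $M$ colour classes $\N=\bigcup_{c=1}^{M}A_c$. Each class is $r$-separated, hence uniformly discrete. Pad with empty classes to get exactly $2^{L'}$ classes, where $2^{L'}\ge M$.

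\textbf{Step 2 (select one class).} Rather than selecting a colour class directly, I would redo the binary-selector argument of Lemma \ref{conditions_sampling_function_injective} and Theorem \ref{frame_distinct_elements}. Set $T_n=2^{-\beta_0}T_{\Psi(x_n)}/B$, so that $\sum T_n\approx \mathbf I$ and $\operatorname{tr}(T_n)\le 2^{-\beta_0}D/B$, which is small. Apply Theorem \ref{binary_selector_theorem} blockwise on the orthogonal decomposition $H_k$, exactly as in Theorem \ref{frame_distinct_elements}. Choose the intermediate pairings so that, within each neighbourhood cluster $B_{R/2}(x_n)$ (which has at most $M$ points), points are paired with each other first. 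After $N\approx 2L$ rounds with $2^{N}\ge M$, each selector then contains at most one point from each cluster.

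The result is $\norm{2^{N}\sum_{n\in I_b}T_n-\sum T_n}\le C\sqrt{2^N 2^{-\beta_0}D/B}$ plus the tail errors controlled via property (C). Choosing $C_1$ large, depending on $C_{R_A}$, makes this error at most $\epsilon/2$ with $\beta=\beta_0$ adjusted. This produces the factor $2^{2L-\beta}$ with $L\le M$, and combining with the first estimate gives (a).

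\textbf{Step 3 (separation).} It remains to deduce (b) from ``at most one point per cluster.'' I would use the partition sets $X_j$ of diameter at most $2R$ from Lemma \ref{partition_cardinality_lemma} as clusters, pairing points lying in $X_j$ with those in neighbouring $X_{j'}$ whenever they are within distance $r$. The doubling bound limits the number of such neighbours, and this is what controls $L$.

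\textbf{Main obstacle.} The hardest step is ensuring the pairing rule separates all close pairs, including pairs straddling different blocks $I_k$ of the Hilbert-space decomposition and different sets $X_j$. I would try first to absorb the cross-block interactions by the same trick as \eqref{2es}, using consecutive blocks and losing only a constant factor. Failing that, I would fall back on the colouring: take the colour class with minimal trace in the compressed sense, and use Lemma \ref{elementary_Hilbert_result} to control the off-diagonal parts.
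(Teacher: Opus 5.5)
Your proposal has a genuine gap at exactly the step this theorem is about: turning bounded multiplicity into $r$-separation. The halving mechanism you borrow from Lemma~\ref{conditions_sampling_function_injective} only works when the sets you ``pair within first'' are disjoint blocks, and even then it only gives at most one point per block.

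\begin{itemize}
\item The balls $B_{R/2}(x_n)$ overlap, so ``pair within each cluster first'' does not define a pairing.
\item If you use the disjoint sets $X_j$ instead (your Step 3), ``at most one point per $X_j$'' is exactly what Theorem~\ref{frame_distinct_elements} already delivers. It says nothing about points in neighbouring $X_j$'s, which can be arbitrarily close. In fact no partition into blocks of bounded size contains every close pair (take points spaced $r/2$ apart along a line), so every within-block scheme leaves close pairs unseparated. Running the argument blockwise over the $H_k$ adds cross-block close pairs on top of that.
\item The fallback fails for a different reason. A greedy colour class carries no spectral balancing, so nothing makes $M\sum_{n\in A_c}T_n$ close to $\sum_n T_n$ in norm on the infinite-dimensional part. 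In Lemma~\ref{conditions_sampling_function_injective} that norm control comes from Theorem~\ref{binary_selector_theorem}; minimizing the compressed trace only handles the finite-dimensional subspace $\mathcal K$.
\item Separately, starting from tolerance $\epsilon/2$ gives $\operatorname{tr}(T_n)<\epsilon^2/(64B^2C^2)$. An order-$N$ selector then has error about $2^{N/2}\epsilon/(8B)$, which is not small once $2^N\ge M$. The initial tolerance must be of size $\epsilon\,2^{-O(M)}$.
\end{itemize}

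The missing idea is to use the close pairs themselves as the size-two partition fed to Theorem~\ref{binary_selector_theorem}, so that each selector keeps exactly one point of every paired close pair. After Theorem~\ref{frame_distinct_elements} all weights equal $2^{-\beta}$, so no blocks and no Lemma~\ref{conditions_sampling_function_injective} are needed; the paper applies the selector theorem to the whole infinite family. One cycle of the paper's argument runs as follows.
\begin{itemize}
\item Take a maximal matching of the pairs with $d(x_n,x_m)<2r$, pair unmatched indices with zero operators, and make an order-one selection.
\item By maximality, unmatched points are $2r$-separated. So a second order-one selection can pair each unmatched survivor that still has a neighbour within $r$ with a distinct such neighbour.
\end{itemize}
Each cycle lowers the number of surviving points in $B_{2r}(x_n)$ by one for every survivor that is not yet $r$-isolated. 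Hence after $L\le 2(C_{R_A})^5$ cycles the set is $r$-separated. The errors telescope to at most $2^{2L}$ times the one-step error, which is why $C_1$ and the initial tolerance carry the factor $2^{4(C_{R_A})^5}$.

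Your colouring idea can be salvaged along the same lines. Colour the \emph{edges}, not the vertices, of the closeness graph: its maximal degree is at most $M-1$, so greedily at most $2M-3$ colours suffice, and each colour class is a matching. Use the $j$-th matching, restricted to the current selector, as the pairing in round $j$. After all rounds every close pair has lost a member.
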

\begin{proof}
Let $0<\epsilon<\min(1,B)$ be small enough and let $C_1=4C^2\cdot 2^{4(C_{R_A})^5}$, where $C$ is the absolute constant as in Theorem \ref{binary_selector_theorem}. 
Let $\beta\in\N$ be such that $2^{-\beta}<\frac{\epsilon^2}{C_1BD}\leq 2^{-\beta+1}$ and let $$r\coloneq\min\Bigparen{\frac{1}{8}\Bigparen{\frac{\epsilon^2}{2\alpha C_1 BD}}^{\frac{1}{\gamma}},\frac{R_A}{4}}.$$
By Theorem \ref{frame_distinct_elements}, there exists some sequence $\set{x_n}\inN\subseteq X$ with $\norm{\Psi(x_n)}^2\Le D$ for all $n\in \N$ such that 
\begin{equation}\label{fra}
\bignorm{S_\Psi-2^{-\beta}\sum_{n\in\N}T_{\Psi(x_n)}}<\frac{\epsilon}{2^{4(C_{R_A})^5}}\quad \text{and}\quad \#\bigparen{B_{2r}(x_n)\cap\{x_k\,|\,k\in\N\}}\leq 2(C_{R_A})^5,
\end{equation}
for all $n\in\N.$ 
    Let $T\colon H\rightarrow H$ be the frame operator of $\Psi':=\frac{\Psi}{\sqrt{2^{\beta+1}B}}$. Note that $T=\frac{1}{2B}(2^{-\beta}\sum_{n\in\N}T_{\Psi(x_n)})$ and $\text{tr}(T_{\Psi'(x_n)})= \frac{D}{2^{\beta+1}B}\leq \frac{\epsilon^2}{2C_1B^2}$.
    By \eqref{fre} we have
    \begin{equation}
    T:=\sum_{n\in\N} T_{\Psi'(x_n)}\Le \frac{B+\epsilon}{2B}<\textbf{I}.
    \end{equation}
    We will iterate the partition and binary selection process to the index set $\N$ until we obtain the desired sequence. We first partition $\N$ by paring $n$ and $m$ if $d(x_n,x_m)<2r$, and repeat this process until no further pair meets this criterion. We then partition $\N$ into the following two sets 
$$J_{1,1}= \set{n\in \N\,|\, x_n\text{ is paired with some $x_m$ with }d(x_n,x_m)<2r},$$
and    
    $$J_{1,2}= \set{n\in \N\,|\, x_n\text{ is left unpaired}}.$$
If $n,m \in J_{1,1}$ are paired, then we pair the corresponding operators $T_{\Psi'(x_n)}$ and $T_{\Psi'(x_m)}$. For each unpaired $n\in J_{1,2}$, we pair the operator $T_{\Psi'(x_n)}$ with a zero operator $\mathcal N_n$ on $H$. This yields a partition of the disjoint union of $\N$ and $J_{1,2}$ into subsets of size two. We then apply Theorem \ref{binary_selector_theorem} to extract binary selectors of order $1$, $B_b$, $b\in \set{0,1}.$ Hence, there exists a subset $N_1$ of $\N$ such that $$\bignorm{2\sum_{n\in N_1}T_{\Psi'(x_n)}-T}<C\sqrt{\frac{2\epsilon^2}{2C_1B^2}}=\frac{\epsilon}{2B\cdot 2^{4(C_{R_A})^5}}.$$
In particular, we have 
$$\#(B_{2r}(x_n)\cap\set{x_k| k\in N_1})\leq 2(C_{R_A})^5-1 \quad \text{for all }n\in J_{1,1}\cap N_1.$$
Next, we perform another binary selection process on $N_1$ as follows. Note that for each $n\in J_{1,2}\cap N_1$, a point $x_n$ is the only element in $B_{2r}(x_n)\cap \set{x_k}_{k\in \N}$ that was left unpaired in the first binary selection process. It follows that $B_r(x_n)\cap B_r(x_m)=\emptyset$ for all distinct $n,m\in N_1\cap J_{1,2}.$ Otherwise, we would have $x_m\in B_{2r}(x_n)$. That is, the distance between $x_n$ and $x_m$ is less than $2r$, but both of them were left unpaired, which is a contradiction. Next, we define 
$$J^{(1)}_{1,2}=\set{n\in J_{1,2}\cap N_1\,|\,B_{r}(x_n)\cap \{x_m | m\in N_1\}\neq \{x_n\}}.$$
Note that for those $n\in N_1 \setminus J^{(1)}_{1,2}$ we either have 
$$\#\bigparen{B_{r}(x_n)\cap \{x_m\,|\, m\in N_1\}}= \#\set{x_n}=1$$ or 
$$\#\bigparen{B_{2r}(x_n)\cap \{x_m \,|\,m\in N_1\}}\leq 2(C_{R_A})^5-1.$$ 
We now partition $N_1$ into sets of size $2$ by first pairing $n\in J_{1,2}^{(1)}$ with $m\in N_1$ if $d(x_n,x_m)<r.$ We then randomly pair the remaining indexes. Likewise, for those $n$ left unpaired, we pair the operator $T_{\Psi'(x_n)}$ with the zero operator $\Nc_{n}$ on $H$. By Theorem \ref{binary_selector_theorem}, there exists some subset $N_2\subseteq N_1$ such that  
$$\bignorm{2\sum_{n\in N_2}T_{\Psi'(x_n)}-\sum_{n\in N_1}T_{\Psi'(x_n)}}<\frac{\epsilon}{2B\cdot 2^{4(C_{R_A})^5}}.$$
Moreover, for each $n\in N_2$ we either have $$\#\bigparen{B_{r}(x_n)\cap\{x_m\,|\, m\in N_1\}}= \#\set{x_n}=1$$ or $$\#\bigparen{B_{2r}(x_n)\cap \{x_m\,|\, m\in N_1\}}\leq 2(C_{R_A})^5-1.$$
We define these two binary selection procedures as one cycle. Assume that $N_{2k}$ is the set obtained after performing this cycle $k$ times for some $k\geq 1$. We then iterate $(k+1)$-th cycle of binary selection procedures on $N_{2k}$, where we define $J_{k+1,1}$, $J_{k+1,2}$, $J_{k+1,2}^{(1)}$ similarly and then obtain $N_{2k+1}$ and $N_{2k+2}$ by Theorem \ref{binary_selector_theorem}. In particular, for each $k\in \N$ we have 
\begin{equation*}
\bignorm{2\sum_{n\in N_{2k+1}}T_{\Psi'(x_n)}-\sum_{n\in N_{2k}}T_{\Psi'(x_n)}}<\frac{\epsilon}{2B\cdot 2^{4(C_{R_A})^5}}, 
\end{equation*}
and 
\begin{equation*}
\bignorm{2\sum_{n\in N_{2k+2}}T_{\Psi'(x_n)}-\sum_{n\in N_{2k+1}}T_{\Psi'(x_n)}}<\frac{\epsilon}{2B\cdot 2^{4(C_{R_A})^5}}.
\end{equation*}
Moreover, we have either $$\#\bigparen{B_{2r}(x_n)\cap \set{x_m}_{m\in N_{2k+1}}}\leq 2(C_{R_A})^5-(k+1)\quad \text{or} \quad \#\bigparen{B_{r}(x_n)\cap \set{x_m}_{m\in N_{2k+1}}}=1$$ for all $n\in N_{2k+2}.$ As a result, it requires at most $2(C_{R_A})^5-1$ iterations of this cycle to obtain a uniformly discrete subset of $\set{x_n}\inN.$ 

Suppose that $\set{x_n}_{n\in N_{2L}}$ is the desired uniformly discrete subset for some $1\leq L\leq 2(C_{R_A})^5-1.$ That is, $d(x_n,x_m)\geq r$ for all distinct $n,m\in N_{2L}$. By telescoping we have
\[\bigg\|2^{2L}\sum_{n\in N_{2L}}T_{\Psi'(x_n)}-\sum_{n\in \N}T_{\Psi'(x_n)}\bigg\| \leq \bigg(\sum_{k=0}^{2L-1} 2^k\bigg)\frac{\epsilon}{2B\cdot 2^{4(C_{R_A})^5}}.
\]
Hence, 
$$
\bignorm{2^{2L-\beta}\sum_{n\in N_{2L}}T_{\Psi(x_n)}-2^{-\beta}\sum_{n\in \N}T_{\Psi(x_n)}} \leq \Bigparen{\sum_{k=0}^{2L-1} 2^{k}}\frac{\epsilon}{2^{4(C_{R_A})^5}}.$$
Since $L\leq 2(C_{R_A})^5$ and $C_{R_A}\geq 1$, it follows by \eqref{fra} that 
\begin{align*}
\begin{split}
\Bignorm{S_\Psi-2^{2L-\beta}\sum_{n\in N_{2L}}T_{\Psi(x_n)}}
&<\frac{\epsilon}{2^{4(C_{R_A})^5}}+
\Bigparen{\sum_{k=0}^{2L-1} 2^{k}}\frac{\epsilon}{2^{4(C_{R_A})^5}} 
\\
&<\Bigparen{1+\sum_{k=0}^{2L-1} 2^{k}}\frac{\epsilon}{2^{2L}}=\epsilon.
\qedhere
\end{split}
\end{align*}
\end{proof}
\begin{remark}
Assume $\set{x_n}\inN$ is a sequence that satisfies the estimate (\ref{main_result_operator_norm_estimate}). Then we have $$(A-\epsilon)\textbf{I}\leq 2^{2L-\beta}\sum_{n\in\N} T_{\Psi(x_n)}\leq (B+\epsilon)\textbf{I}.$$
It follows that $\set{\Psi(x_n)}\inN$ is a frame with frame bounds $2^{\beta-2L}(A-\epsilon)$ and $2^{\beta-2L}(B+\epsilon)$. Hence, Theorem \ref{frame_uniformly_discrete_elements} implies that one can sample a continuous frame by uniformly discrete sequence to obtain a frame with nearly the same frame ratio.   
\end{remark}

\section{Applications}
\label{application}
We conclude this paper by presenting several applications of Theorem \ref{frame_uniformly_discrete_elements} to Gabor frames, wavelet frames, and frames of exponentials.

\subsection{Gabor systems}
The first  continuous frame of particular interest to us is the \emph{short-time Fourier transform}. Since its introduction in the early 1980s by Feichtinger and subsequent development in joint work with Gr\"{o}chenig, short-time Fourier transform and \emph{modulation spaces} have been recognized as the fundamental tool and function spaces for the study of time-frequency analysis. We refer to \cite{Fei81}, \cite{Fei06} and \cite{Gro01} and the references therein for more history context and background knowledge on this topic.    
Let $d\geq1$ and $g\in L^2(\R^d)$ be a nonzero function. The short-time 
Fourier transform (with window function $g$) is the map $\Psi_g\colon \R^{2d}\rightarrow L^2(\R^d)$ defined by $$\Psi_g(a,b)(x)=e^{-2\pi ibx}g(x-a), \quad \text{for all }a,b\in \R^{d}.$$
It is known that $\Psi_g$ is a continuous tight frame with a frame bound $\norm{g}^2_{L^2(\R^d)}$ (for example, see \cite[Section 3.2]{Gro01}). The Euclidean space $\R^{2d}$ equipped the usual Lebesgue measure clearly satisfies conditions (II) and (III). In particular, $\mu(B_{2r}(x))\leq 2^{2d}\mu(B_{r}(x))$ for all $r>0$ in this setting. 
As a result, we obtain the following Theorem.


\begin{theorem} 
\label{application_I_Gabor}
Let $d\geq1$ and let $g\in L^2(\R^d)$ be a nonzero function. For any sufficiently small $\epsilon>0$, there exists a uniformly discrete subset $\Lambda=\set{(a_n,b_n)}\inN\subseteq \R^{2d}$ such that the following statements hold.
\begin{enumerate}
\setlength\itemsep{0.3em}
    \item [\textup{(a)}] $\Gc{(g,\Lambda)}$ is a frame for $L^2(\R^d)$ with frame bounds $C_1 \norm{g}^2_{L^2(\R^d)} \epsilon^{-2} (1 \pm\epsilon)$
     \item [\textup{(b)}] $\inf\limits_{n\neq m}\bignorm{(a_n,b_n)-(a_m,b_m)}\geq {C_2} \epsilon^{1/d}$,
\end{enumerate}
 for some constants $C_1, C_2>0$ depending only the dimension $d$. \qeddef
\end{theorem}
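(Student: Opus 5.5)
Theorem \ref{application_I_Gabor} follows by specializing Theorem \ref{frame_uniformly_discrete_elements} to the short-time Fourier transform $\Psi_g\colon\R^{2d}\to L^2(\R^d)$. Take $X=\R^{2d}$ with the Euclidean metric and Lebesgue measure $\mu$.

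First we check the hypotheses. Condition (I) holds since $\mu(\R^{2d})=\infty$. Condition (II) holds for every $R_A>0$ with $C_{R_A}=2^{2d}$, so we may take $R_A=\infty$, or equivalently any $R_A$ larger than the radii that arise. Condition (III) holds with $\gamma=2d$ and $\alpha=\omega_{2d}$, the volume of the unit ball in $\R^{2d}$. Every value of $\Psi_g$ has norm $\norm{\Psi_g(a,b)}^2=\norm{g}^2$, so $D=\norm{g}^2$. The map $\Psi_g$ is a continuous tight frame with $A=B=\norm{g}^2$; this is the orthogonality relation $\int_{\R^{2d}}|\ip{f}{\Psi_g(a,b)}|^2\,d(a,b)=\norm{g}^2\norm{f}^2$.

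Next we apply Theorem \ref{frame_uniformly_discrete_elements} with $\epsilon' = \epsilon\norm{g}^2$. This yields $\beta$, an integer $L$ with $1\le L\le 2\cdot 2^{10d}$, and points $\Lambda=\set{(a_n,b_n)}$ such that
\[
\bignorm{\norm{g}^2\textbf{I}-2^{2L-\beta}\textstyle\sum_n T_{\Psi_g(a_n,b_n)}}<\epsilon\norm{g}^2 .
\]
As in the Remark, $\set{\Psi_g(a_n,b_n)}$ is then a frame with bounds $2^{\beta-2L}\norm{g}^2(1\pm\epsilon)$. The modulation sign convention is $e^{-2\pi i bx}$ versus $e^{2\pi i bx}$. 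Flipping $b\mapsto -b$ turns this set into $\Gc(g,\Lambda')$ with $\Lambda'$ the reflected set, and the reflection is an isometry, so separation is unaffected.

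It remains to express the frame bounds and separation in terms of $\epsilon$. By construction $2^{-\beta}$ is comparable to $\epsilon'^2/(C_1BD)=\epsilon^2/C_1$, since $B=D=\norm{g}^2$ and $\epsilon'^2=\epsilon^2\norm{g}^4$. Hence $2^{\beta}\in [C_1\epsilon^{-2}, 2C_1\epsilon^{-2})$. Moreover $2^{-2L}$ lies between $2^{-4\cdot 2^{10d}}$ and $1$, which depends only on $d$.

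This exposes the one gap: the bounds are $c\,\norm{g}^2\epsilon^{-2}(1\pm\epsilon)$ with $c$ known only up to a bounded factor depending on $d$. To get a clean statement, I would read (a) with the constant allowed to depend on $\epsilon$ only through this bounded range. Alternatively, I would note that the frame bounds may be rescaled freely, so what matters is their ratio $(1+\epsilon)/(1-\epsilon)$ together with the order of magnitude $\epsilon^{-2}$.

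For the separation, Theorem \ref{frame_uniformly_discrete_elements}(b) gives
\[
r=\tfrac18\Bigl(\frac{\epsilon'^2}{2\alpha C_1BD}\Bigr)^{1/(2d)}=\tfrac18\Bigl(\frac{\epsilon^2}{2\omega_{2d}C_1}\Bigr)^{1/(2d)}=C_2\,\epsilon^{1/d},
\]
and this is unaffected by the $R_A/4$ term since $R_A=\infty$. Since $C_1=4C^2 2^{4\cdot 2^{10d}}$, the constant $C_2$ depends only on $d$.

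The only real obstacle is bookkeeping. The constants must not depend on $\norm{g}$, which holds because the scalings $\epsilon'=\epsilon\norm{g}^2$ and $B=D=\norm{g}^2$ cancel. The other point is the precise normalization in (a), handled as in the previous paragraph.
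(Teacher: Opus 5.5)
Your proposal is correct and takes the same route as the paper, which likewise just invokes Theorem \ref{frame_uniformly_discrete_elements} for $\Psi_g$ on $\R^{2d}$ with $A=B=D=\norm{g}^2$, $C_{R_A}=2^{2d}$ and $\gamma=2d$; your rescaling $\epsilon'=\epsilon\norm{g}^2$, the reflection $b\mapsto -b$, and the computation $r=C_2\epsilon^{1/d}$ are the bookkeeping the paper leaves implicit. The normalization issue you flag in (a) is genuine: the center of the frame bounds is $2^{\beta-2L}\norm{g}^2$, and $2^{\beta-2L}\epsilon^2$ is only confined to a $d$-dependent range, so the theorem does not literally give a single constant $C_1$. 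The paper's one-line derivation has exactly the same feature, so your reading is the one the paper intends.
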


\subsection{Wavelet systems} The continuous wavelet transform associated with a nonzero function $\psi\in L^2(\R)$ is the mapping $\pi\colon \R\rtimes \R^+\rightarrow L^2(\R)$ defined by $$\pi(b,a)(\psi)(x)=a^{1/2}\psi(ax-b).$$
Unlike short-time Fourier transform, an additional admissibility condition (\ref{admissibility_condition}) must be made to make the continuous wavelet transform associated with the affine group $\R\rtimes \R^+$ a continuous tight frame. By identifying $\R\rtimes \R^+$ as the hyperbolic plane, one can define the distance function $d$ on $\R\rtimes \R^+$ by $$d\bigparen{(b_1,a_1),(b_2,a_2)}=\text{arccosh}\bigg(1+\frac{|(b_1,a_1)-(b_2,a_2)|^2}{2a_1a_2}\bigg).$$
With the left-invariant Haar measure associated with $\R\rtimes \R^+$ defined by $d\mu(b,a)=a^{-2}dbda$, we obtain the following identity for the measure of balls with radius $r$ (with respect to the hyperbolic metric)
\begin{equation}
\label{measure_ball_hyperbolic}
\mu(B_{r}(b,a))=2\pi (\cosh(r)-1)\quad \text{for all }(b,a)\in \R\rtimes \R^+~\text{and all }r>0.
\end{equation}
For the derivation of (\ref{measure_ball_hyperbolic}), as well as references for hyperbolic geometry, we refer to \cite{Be83} and \cite{Ka92}.
As a result, all conditions (I)--(III) are satisfied. Indeed, \eqref{measure_ball_hyperbolic} implies that $\mu(B_{r}(b,a))$ behaves asymptotically as $\pi r^2$ when $r\to 0$. We accordingly obtain the following theorem. 
\begin{theorem}
  Let $\psi\in L^2(\R)$ be a nonzero function satisfying the admissibility condition 
  \[
  B= 
\int_{(-\infty,0)}\frac{|\widehat{\psi}(\xi)|^2}{|\xi|}\,d\xi
= \int_{(0,\infty)}\frac{|\widehat{\psi}(\xi)|^2}{|\xi|}\,d\xi<\infty.
\]
Then for any sufficiently small $\epsilon>0$, there exists a uniformly discrete subset $\Gamma=\set{(b_n,a_n)}\inN\subseteq \R\rtimes \R^+$ such that the following statements hold.
\begin{enumerate}
\setlength\itemsep{0.3em}
    \item [\textup{(a)}] $\Wc(\psi,\Gamma)$ is a frame for $L^2(\R)$ with frame bounds $C_1 \norm{\psi}_{L^2(\R^d)}^2 \epsilon^{-2} (1\pm \epsilon)$,
     \item [\textup{(b)}] $\inf\limits_{n\neq m}\bignorm{(a_n,b_n)-(a_m,b_m)}\geq C_2  B^{1/2}\norm{\psi}^{-1}_{L^2(\R^d)} \epsilon$,
\end{enumerate}
 for some absolute constants $C_1, C_2>0$. \qeddef
\end{theorem}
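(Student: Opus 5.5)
The plan is to apply Theorem \ref{frame_uniformly_discrete_elements} to the continuous wavelet transform, viewed as a map $\Psi\colon X\to L^2(\R)$. Here $X=\R\rtimes\R^+$ carries the hyperbolic metric $d$ and the left Haar measure $d\mu(b,a)=a^{-2}\,db\,da$, and we set $\Psi(b,a)=\pi(b,a)\psi$.

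\textbf{Step 1: $\Psi$ is a bounded continuous tight frame.} By the Calder\'on reproducing formula, the admissibility condition gives
\[
\int_X|\ip{f}{\Psi(b,a)}|^2\,d\mu(b,a)=B\norm{f}^2 \quad\text{for all } f\in L^2(\R),
\]
possibly up to a normalization constant from the Fourier convention, which is absorbed into $B$. Hence $\Psi$ is a continuous tight frame with bound $B$. It is bounded because each $\pi(b,a)$ is unitary, so $\norm{\Psi(b,a)}^2=\norm{\psi}^2=:D$ everywhere.

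\textbf{Step 2: the space satisfies (I)--(III).} We use \eqref{measure_ball_hyperbolic}, $\mu(B_r)=2\pi(\cosh r-1)$. This gives (I), since $\mu(X)=\infty$. Take $R_A=1$.
\begin{itemize}
\item[] For (II): $\cosh(2r)-1=2\sinh^2 r\cdot 2\cosh^2 r/\cosh^2 r$ simplifies, since $\cosh 2r-1=2\sinh^2 r$ and $\cosh r-1=2\sinh^2(r/2)$. The ratio equals $\sinh^2 r/\sinh^2(r/2)=4\cosh^2(r/2)\le 4\cosh^2(1/2)$, so $C_{R_A}$ is an absolute constant.
\item[] For (III): $\cosh r-1\le \tfrac{\cosh 1}{2}r^2$ for $r\le1$, so $\gamma=2$ and $\alpha$ is absolute.
\end{itemize}
Balls have positive measure, and the measure is non-atomic and Borel.

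\textbf{Step 3: conclusions.} Theorem \ref{frame_uniformly_discrete_elements} yields $\set{x_n}=\set{(b_n,a_n)}$ and an integer $\beta$ with
\[
\norm{S_\Psi-2^{2L-\beta}\textstyle\sum_n T_{\Psi(x_n)}}<\epsilon', \qquad S_\Psi=B\,\mathbf I.
\]
Apply it with $\epsilon'=\epsilon B$, assuming $\epsilon$ is small. By the Remark after that theorem, the frame bounds are $2^{\beta-2L}B(1\pm\epsilon)$. Since $2^{-\beta}\asymp \epsilon'^2/(C_1BD)=\epsilon^2B/(C_1D)$ and $L$ is bounded by an absolute constant, these bounds are comparable to $C_1\norm{\psi}^2\epsilon^{-2}(1\pm\epsilon)$.

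\textbf{Step 4: separation.} The separation radius is
\[
r=\min\Bigl(\tfrac18\bigl(\epsilon^2B/(2\alpha C_1D)\bigr)^{1/2},\tfrac14\Bigr),
\]
which is $\gtrsim B^{1/2}\norm{\psi}^{-1}\epsilon$ for small $\epsilon$.

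\textbf{Main obstacle.} Statement (b) is phrased via $\bignorm{(a_n,b_n)-(a_m,b_m)}$, not the hyperbolic distance $d$. Uniform discreteness in $d$ does not directly give Euclidean separation, since points near $a=0$ can be Euclidean-close while hyperbolically far apart. So I would interpret (b) with the norm understood as the hyperbolic distance $d$, the natural metric of the statement. Alternatively, I would note that the constant in the exact scaling of $\beta$ versus $\epsilon$ must be handled with some care, using $2^{-\beta}$ between $\tfrac12$ and $1$ times the threshold, so that the bound holds exactly with a constant of the form $C_1(1\pm\epsilon)$.
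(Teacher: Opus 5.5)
Your route is the paper's. There the theorem is presented as a direct consequence of Theorem~\ref{frame_uniformly_discrete_elements}, applied to the continuous wavelet transform on the hyperbolic upper half-plane with $d\mu=a^{-2}\,db\,da$, with (I)--(III) read off from $\mu(B_r)=2\pi(\cosh r-1)$. Your explicit checks are correct and more detailed than what the paper writes: the doubling ratio $4\cosh^2(r/2)$, $\gamma=2$, and $\alpha=\pi\cosh 1$. The bookkeeping with $\epsilon'=\epsilon B$ is also correct; it gives bounds of order $\|\psi\|^2\epsilon^{-2}(1\pm\epsilon)$ and separation of order $B^{1/2}\|\psi\|^{-1}\epsilon$.

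Step 1, however, is false as written, and the problem is not a normalization constant. You inherited it from the paper's setup paragraph, which has the same mismatch. For $\Psi(b,a)=a^{1/2}\psi(a\cdot-b)$ one has $\widehat{\Psi(b,a)}(\xi)=a^{-1/2}e^{-2\pi i b\xi/a}\widehat\psi(\xi/a)$, and Plancherel in $b$ gives
\[
\int_0^\infty\!\!\int_{\R}|\langle f,\Psi(b,a)\rangle|^2\,\frac{db\,da}{a^2}
=\|\widehat\psi\|^2_{L^2(0,\infty)}\int_0^\infty\frac{|\widehat f(\xi)|^2}{\xi}\,d\xi
+\|\widehat\psi\|^2_{L^2(-\infty,0)}\int_{-\infty}^0\frac{|\widehat f(\xi)|^2}{|\xi|}\,d\xi .
\]
This is not even bounded by a multiple of $\|f\|^2$; take $\widehat f=\chi_{[\delta,2\delta]}$ and let $\delta\to0$.

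The Calder\'on identity with constant $B$ and measure $a^{-2}\,db\,da$ holds for $a^{-1/2}\psi((x-b)/a)$. That is the parametrization for which both the hyperbolic metric and $a^{-2}\,db\,da$ are left invariant. The paper's form arises from it by the substitution $(b',a')=(b/a,1/a)$, since $a^{1/2}\psi(ax-b)=a'^{-1/2}\psi((x-b')/a')$. In the $(b,a)$ coordinates the group law becomes the opposite one, $(b_1,a_1)(b_2,a_2)=(a_2b_1+b_2,a_1a_2)$, whose left Haar measure is $a^{-1}\,db\,da$.

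You cannot simply switch to that measure and keep the raw hyperbolic metric in $(b,a)$. Balls would then have measure proportional to $a$, so (III) fails. The clean fix is to run your Steps 1--4 verbatim in the standard coordinates, where everything you wrote is valid. Then transport $\Gamma$ by the substitution, with uniform discreteness understood for the pulled-back hyperbolic metric; the substitution is not a hyperbolic isometry.

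Two smaller remarks:
\begin{itemize}
\item Your reading of (b) as hyperbolic distance is the right one. Theorem~\ref{frame_uniformly_discrete_elements} says nothing about Euclidean separation.
\item In (a), $\beta$ and $L$ are integers. So you only get bounds $K(1\pm\epsilon)$ with $K\epsilon^2/\|\psi\|^2$ trapped between two absolute constants, not a single fixed $C_1$. Your ``alternatively'' sentence cannot improve this, and it is also all the paper's argument yields.
\end{itemize}
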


The versatility of Theorem \ref{frame_uniformly_discrete_elements} makes it applicable not only for one dimensional, but also for higher dimensional continuous wavelets in $L^2(\R^d)$. However, we will not explore this direction here.

\subsection{Exponential systems}
Although Fourier transform on $L^2(\R^d)$ is not a continuous frame, the restriction of Fourier transform to $L^2(S)$ is a Parseval frame for any subset $S$ of $\R^d$ with finite, positive  measure. Therefore, Theorem \ref{frame_uniformly_discrete_elements} recovers the following result, which was shown in \cite[Corollary 8.5]{Bo24} using different argument.

\begin{theorem} 
\label{application_III_exponential}
Let $d\geq 1$ and let $S\subseteq \R^d$ be a set of finite, positive measure. For any sufficiently small $\epsilon>0$, there exists a uniformly discrete subset $\set{\lambda_n}\inN$ of $\R^d$ such that the following statements hold:
\begin{enumerate}
\setlength\itemsep{0.3em}
    \item [\textup{(a)}] $\set{e^{2\pi i\lambda_nx}}\inN$ is a frame for $L^2(S)$ with frame bounds $C_1 |S| \epsilon^{-2}(1\pm\epsilon)$,
     \item [\textup{(b)}]  $\inf\limits_{n\neq m}|\lambda_n-\lambda_m|\geq C_2 \bigparen{\frac{\epsilon^2}{|S|}}^{1/d}$,
\end{enumerate}
 for some constants $C_1, C_2>0$ depending only the dimension $d$. \qeddef
\end{theorem}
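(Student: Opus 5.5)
The plan is to apply Theorem \ref{frame_uniformly_discrete_elements} to the restricted Fourier transform. Let $X=\R^d$ with Lebesgue measure $\mu$ and Euclidean metric. Define $\Psi\colon \R^d\to L^2(S)$ by $\Psi(\xi)(x)=e^{2\pi i \xi x}\mathbf 1_S(x)$. By Plancherel, for $f\in L^2(S)$ (extended by zero to $\R^d$) we have
\[
\int_{\R^d}|\ip{f}{\Psi(\xi)}|^2\,d\xi=\norm{\widehat f}^2_{L^2(\R^d)}=\norm{f}^2_{L^2(S)}.
\]
So $\Psi$ is a continuous Parseval frame, with $A=B=1$. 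It is bounded, since $\norm{\Psi(\xi)}^2=|S|$ for every $\xi$, so we take $D=|S|$. The map $\xi\mapsto\Psi(\xi)$ is norm continuous by dominated convergence, hence measurable.

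Next I check conditions (I)--(III). Condition (I) holds since $\mu(\R^d)=\infty$. For (II), $\mu(B_{2r}(x))=2^d\mu(B_r(x))$ for all $r$, so $R_A$ may be taken arbitrary (say $R_A=1$) with doubling constant $C_{R_A}=2^d$. For (III), $\mu(B_r(x))=\omega_d r^d$, so $\alpha=\omega_d$ and $\gamma=d$. Every constant appearing in Theorem \ref{frame_uniformly_discrete_elements} therefore depends only on $d$; note that $C_1$ there depends on $C_{R_A}=2^d$ and the absolute constant $C$. One subtlety is that $L^2(S)$ must be infinite-dimensional and separable. This holds because $|S|>0$ and $\mu$ is non-atomic.

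Apply Theorem \ref{frame_uniformly_discrete_elements} with $\epsilon'$ in place of $\epsilon$. This yields $\{\lambda_n\}$ and integers $\beta,L$ with $\norm{S_\Psi-2^{2L-\beta}\sum T_{\Psi(\lambda_n)}}<\epsilon'$. Since $S_\Psi=\mathbf I$, we get
\[
(1-\epsilon')\,\mathbf I\le 2^{2L-\beta}\sum_n T_{\Psi(\lambda_n)}\le (1+\epsilon')\,\mathbf I .
\]
So $\{e^{2\pi i\lambda_n x}\}$ is a frame with bounds $2^{\beta-2L}(1\pm\epsilon')$. Now $2^{\beta}\asymp C_1|S|\epsilon'^{-2}$ by the choice of $\beta$ (within a factor $2$), and $1\le L\le 2\cdot 2^{5d}$. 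Hence $2^{\beta-2L}$ equals $c\,|S|\epsilon'^{-2}$ with $c$ confined to an interval depending only on $d$. This gives (a) after absorbing constants into $C_1$.

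The main obstacle is that the theorem only pins $2^{\beta-2L}$ down up to a bounded factor rather than to a single constant. To get a constant $C_1$ independent of $\epsilon$, I would write the frame bounds as $\kappa(1\pm\epsilon)$ with $\kappa=2^{\beta-2L}$ and note that $\kappa\epsilon^2/|S|$ lies in a $d$-dependent compact range. If an exact constant is required, I would rescale $\epsilon$ slightly, or interpret ``frame bounds $C_1|S|\epsilon^{-2}(1\pm\epsilon)$'' up to such a $d$-dependent factor. Separation (b) follows directly from part (b) of the theorem: $r=\min\bigl(\tfrac18(\epsilon^2/(2\omega_d C_1|S|))^{1/d},R_A/4\bigr)$. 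For sufficiently small $\epsilon$ (relative to $|S|$) the first term is the minimum, giving $|\lambda_n-\lambda_m|\ge C_2(\epsilon^2/|S|)^{1/d}$.
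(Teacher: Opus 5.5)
Your proposal is correct and follows the paper's route: the paper only observes that the restricted Fourier transform $\xi\mapsto e^{2\pi i\xi x}\mathbf 1_S$ is a continuous Parseval frame for $L^2(S)$ on $\R^d$ (so $B=1$, $D=|S|$, $C_{R_A}=2^d$, $\gamma=d$) and then invokes Theorem~\ref{frame_uniformly_discrete_elements}. The normalization issue you flag is glossed over in the paper as well. The argument actually gives bounds $\kappa(1\pm\epsilon)$ with $\kappa=2^{\beta-2L}$ and $\kappa\epsilon^2/|S|$ in a $d$-dependent range, so your ``up to a $d$-dependent factor'' reading is the one the proof supports; rescaling $\epsilon$ alone cannot force an exact constant, because $\kappa$ is always a power of $2$ and $L$ varies.
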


\subsection{Reproducing kernels associated with certain spectral subspaces}
Another application arises from the reproducing kernels associated with separable, infinite-dimensional reproducing kernel Hilbert spaces contained in $L^2(X,\mu)$. Here $L^2(X,\mu)$ denotes the Hilbert space of square $\mu$-integrable functions defined on $X$. 
We say that a Hilbert space $H\subseteq L^2(X,\mu)$
is a \emph{reproducing kernel Hilbert space} if for each $x\in X$ there exists some function $K_x\in H$ such that
for all $f\in H$ we have $$f(x)=\ip{f}{K_x}=\int_Xf(t)\overline{K_x}(t)\,d \mu(t)\quad \text{for all }x\in X.$$
The \emph{reproducing kernel} associated with $H$ is the function $K\colon X\times X\rightarrow \C$ defined by 
$$K(x,y)=\ip{K_y}{K_x}$$
In particular, the function $\Psi\colon X\rightarrow H$ defined $\Psi(x)(\cdot)=K_x(\cdot)=\overline{K(x,\cdot)}$ is a continuous Parseval frame for $H.$ 
Consequently, under the conditions of Theorem \ref{frame_uniformly_discrete_elements}, one can always construct uniformly discrete frames for a reproducing kernel Hilbert space with frame bounds as close as desired by sampling its associated reproducing kernel. For example, for any $A>0$ the classical Paley-Wiener Space $$PW_A=\set{f\in L^2(\R)\,|\,\text{supp}(\widehat{f})\subseteq[-A,A]}$$
is a reproducing kernel Hilbert space with the reproducing kernel $K(x,y)=\frac{\sin( A(x-y))}{\pi(x-y)}$. Here $\widehat{f}$ denotes the Fourier transform of $f\in L^2(\R^d).$ Euclidean spaces equipped with the usual Euclidean metric and the Lebesgue measure clearly satisfies assumptions (I)--(III). One can generalize the notion of Paley-Wiener space by viewing the Paley-Wiener space as a spectral subspace associated to an specific positive semi-definite, self-adjoint operator, see \cite{GK24} and \cite{PZ09}.
For every $s\geq 0$, the Sobolev space $W^s_2(\R^d)$ is defined by $$W^s_2(\R^d)=\bigset{f\in L^2(\R^d)\,|\,\norm{f}^2_{W^s_2(\R^d)}=\int_{\R^d} |\widehat{f}(\xi)|^2(1+|\xi|^2)^s<\infty}.$$ 
Let $a=(a_{ij})_{1\leq i,j\leq d}$ be a $d\times d$ positive definite matrix (i.e. there exists some $C>0$ such that $(a(x)\xi)\cdot\xi\geq C\norm{\xi}^2$ for all $x,\xi\in \R^d$) with bounded infinitely differentiable entries $a_{ij}$. The differential operator $H_a$ (with symbol $a$) defined by 
\begin{equation}
\label{differential_operator}
H_af=-\sum_{i,j=1}^d\partial_ia_{ij}\partial_jf,\quad f\in W^2_2(\R^d)
\end{equation}
is a positive semi-definite, uniformly elliptic self-adjoint operator on $\R^d.$ For any $A>0$, we define the \emph{spectral projection} $\chi_{A,H_a}$ associated with $H_a$ on the interval $[0,A]$ by $$\chi_{A,H_a}(f)=\Fc^{-1}\bigparen{\chi_{[0,A]}\cdot\bigparen{\Fc H_a(f)}},$$where $\Fc$ denotes the usual Fourier transform. The \emph{spectral subspaces} $PW_A(H_a)$ associated with $H_a$ is then defined by $PW_A(H_a)=\chi_{A,H_a}(L^2(\R^d))$. By \cite[Proposition 2.2]{GK24}, $PW_A(H_a)$ is a reproducing kernel Hilbert space in $L^2(\R^d)$. Furthermore, the underlying space $\R^d$ satisfies conditions (I)--(III). We accordingly obtain the following theorem.

\begin{theorem} \label{application_spectral_subspace}
Fix $A>0.$
   Let $a=(a_{ij})_{1\leq i,j\leq d}$ be a $d\times d$ positive definite matrix with bounded infinitely differentiable entries $a_{ij}$. 
   Let $K$ be the reproducing kernel associated with $PW_A(H_a)$, where $H_a$ is the differential operator with symbol $a$ defined in Equation (\ref{differential_operator}).
Then for any sufficiently small $\epsilon>0$, there exists a uniformly discrete subset $\set{\lambda_n}\inN\subseteq \R^d$ such that the following statements hold:
\begin{enumerate}
\setlength\itemsep{0.3em}
    \item [\textup{(a)}] $\set{K_{\lambda_n}}\inN$ is a frame for $PW_A(H_a)$ with frame bounds  $C_1 \tilde C \epsilon^{-2} (1\pm \epsilon)$ ,
     \item [\textup{(b)}] $\inf\limits_{n\neq m}\norm{\lambda_n-\lambda_m}\geq C_2 (\epsilon^2/\tilde C)^{1/d}$, \qeddef
\end{enumerate}
for some constants $C_1, C_2>0$ depending only the dimension $d$ and some constant $\widetilde{C}$ depending on the reproducing kernel $K$.\qeddef
\end{theorem}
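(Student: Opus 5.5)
The plan is to apply Theorem \ref{frame_uniformly_discrete_elements} to the continuous Parseval frame $\Psi\colon \R^d\to PW_A(H_a)$, $\Psi(x)=K_x$, with $X=\R^d$ carrying the Euclidean metric and Lebesgue measure. The work splits into verifying the hypotheses and then translating the conclusions (a), (b) of that theorem into the present form.

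\textbf{Hypotheses.} Conditions (I)--(III) hold for $(\R^d,|\cdot|,dx)$. Indeed $\mu(\R^d)=\infty$, the doubling constant is $2^d$ at every scale, and $\mu(B_r(x))=\omega_d r^d$ gives (III) with $\gamma=d$ and $\alpha=\omega_d$. Here $R_A$ may be taken arbitrarily large, say $R_A=1$ once $\epsilon$ is small. The Parseval property is automatic from the reproducing identity: for $f\in PW_A(H_a)$,
\[
\int_{\R^d}|\ip{f}{K_x}|^2\,dx=\int_{\R^d}|f(x)|^2\,dx=\norm{f}^2 ,
\]
so $A=B=1$. Separability and infinite dimensionality of $PW_A(H_a)$ follow since it is a closed subspace of $L^2(\R^d)$ containing infinitely many independent functions.

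\textbf{Uniform bound on the kernel.} The only genuinely nontrivial input is a uniform bound $\norm{\Psi(x)}^2=\norm{K_x}^2=K(x,x)\le \widetilde C$ for all $x$. I would take this from the kernel estimates of Gr\"ochenig--Klotz \cite{GK24}, which cover uniformly elliptic operators with smooth bounded coefficients: the spectral projection kernel satisfies off-diagonal decay and is bounded on the diagonal, uniformly in $x$. If a direct argument is needed instead, I would write $f\in PW_A(H_a)$ as $f=(1+H_a)^{-k}g$ with $\norm{g}\le (1+A)^k\norm{f}$. Elliptic regularity then gives $\norm{f}_{W^{2k}_2}\lesssim \norm{g}$, with constants uniform because the $a_{ij}$ and their derivatives are bounded. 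Choosing $2k>d/2$, the Sobolev embedding yields $|f(x)|\le C\norm{f}$, hence $K(x,x)\le \widetilde C$. This step is the main obstacle, and I expect to simply cite \cite{GK24}. Set $D=\widetilde C$.

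\textbf{Conclusion.} Theorem \ref{frame_uniformly_discrete_elements} now produces points $\{\lambda_n\}$ and integers $\beta,L$ with $\norm{\textbf{I}-2^{2L-\beta}\sum_n T_{K_{\lambda_n}}}<\epsilon$. Hence $\{K_{\lambda_n}\}$ is a frame with bounds $2^{\beta-2L}(1\pm\epsilon)$. Since $2^{-\beta}\asymp \epsilon^2/(C_1\widetilde C)$ and $1\le L\le 2(2^d)^5$, the factor $2^{\beta-2L}$ is comparable to $\widetilde C\epsilon^{-2}$, with constants depending only on $d$. To get an exact constant $C_1$, as in statement (a), I would absorb the bounded ambiguity in $2^{-2L}$ and $\beta$ by rescaling, reading (a) up to the harmless constant. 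For (b), since $R_A$ can be taken large, $r=\tfrac18\bigl(\epsilon^2/(2\alpha C_1\widetilde C)\bigr)^{1/d}$, which equals $C_2(\epsilon^2/\widetilde C)^{1/d}$ with $C_2$ depending only on $d$.
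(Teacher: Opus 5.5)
Your proposal is correct and is essentially the paper's own (implicit) argument: apply Theorem \ref{frame_uniformly_discrete_elements} to the continuous Parseval frame $x\mapsto K_x$ on $\R^d$ with Lebesgue measure, taking $D=\widetilde C\ge \sup_x K(x,x)$. The paper obtains this bound from elliptic regularity and the Bernstein inequality $\norm{H_a^k f}\le A^k\norm{f}$ as in Gr\"ochenig--Klotz, which is exactly your Sobolev-embedding argument. Your reading of (a) as holding only up to a factor depending on $d$ is also the one the paper tacitly intends, since the theorem yields bounds $2^{\beta-2L}(1\pm\epsilon)$ with $2^{\beta-2L}$ merely comparable to $\widetilde C\epsilon^{-2}$.
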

We remark that the constant $\widetilde{C}$ in Theorem \ref{application_spectral_subspace} can be found explicitly by the elliptic regularity of $H_a$ and Bernstein inequality (see \cite[Lemma 2.1 and Propostion 2.2]{GK24}).

\end{document}